\newtheorem{thrm}{Theorem}[section]
\newtheorem{lem}[thrm]{Lemma}
\newtheorem{prop}[thrm]{Proposition}
\newtheorem{cor}[thrm]{Corollary}
\theoremstyle{definition}
\newtheorem{definition}[thrm]{Definition}
\newtheorem{remark}[thrm]{Remark}
\numberwithin{equation}{section}
\author{D.B. Killough}
\address[D.B. Killough]{Department of Mathematics, Physics, and Engineering\\
Mount Royal University\\
Calgary, AB, Canada T3E 6K6}
\email{bkillough@mtroyal.ca}
\author{I.F. Putnam}
\address[I.F. Putnam]{
Department of Mathematics and Statistics\\
University of Victoria\\
Victoria, B.C., Canada V8W 3R4}
\email{putnam@math.uvic.ca}
\thanks{First author supposrted in part by an NSERC Scholarship. Second author supported in part by an NSERC Discovery Grant}
\subjclass[2010]{Primary 37D20, Secondary 37B10, }
\begin{document}
\title[Bowen Measure From Heteroclinic Points]{Bowen Measure From Heteroclinic Points}

\begin{abstract}
We present a new construction of the entropy-maximizing, invariant probability measure on a Smale space (the Bowen measure).  Our construction is based on points that are unstably equivalent to one given point, and stably equivalent to another: heteroclinic points.  The spirit of the construction is similar to Bowen's construction from periodic points, though the techniques are very different.  We also prove results about the growth rate of certain sets of heteroclinic points, and about the stable and unstable components of the Bowen measure.  The approach we take is to prove results through direct computation for the case of a Shift of Finite type, and then use resolving factor maps to extend the results to more general Smale spaces.  
\end{abstract}

\maketitle

\section{Introduction}

A Smale space, as defined by David Ruelle \cite{ruelle78}, is a compact metric space, $X$, together
with a homeomorphism, $\varphi$, which is hyperbolic. 
These include the basic sets of
Smale's Axiom A systems \cite{smale67}. Another special case of great interest are the shifts of finite type \cite{bowen78}, \cite{lindmarcus}
where the space, here usually denoted $\Sigma$, is the path space of a finite directed graph and 
the homeomorphism, $\sigma$, is the left shift.

The structure of $(X,\varphi)$ is such that each point $x$ in $X$ has two local sets associated to it: $X^s(x,\epsilon)$, on which the map $\varphi$ is (uniformly) contracting; and 
 $X^u(x,\epsilon)$, on which the map $\varphi^{-1}$ is contracting.  We call these sets the local stable and unstable sets for $x$.
 Furthermore, $x$ has a neighbourhood, $U(x,\epsilon)$ that is isomorphic to $X^u(x,\epsilon) \times X^s(x,\epsilon)$. In other words, 
the sets $X^u(x,\epsilon)$ and $X^s(x,\epsilon)$ provide a coordinate system for $U(x, \epsilon)$ such that, under application of the map 
$\varphi$, one coordinate contracts, and the other expands. 

The basic axiom for a Smale space is the existence of a map defined on pairs $(x,y)$ in $X \times X$ which are sufficiently close. The image of $(x,y)$ is denoted
$[x, y]$ and is the unique point in $X^{s}(x, \epsilon) \cap X^{u}(y, \epsilon)$. This satisfies a number of identities and, in particular defines a
 homeomorphism from $ X^u(x,\epsilon) \times X^s(x,\epsilon) \rightarrow U(x,\epsilon)$.  

There is also a notion of a global stable (unstable) set for a point $x$, which we denote $X^s(x)$ ($X^u(x)$).  
This is simply the set of all points $y \in X$ such that $d(\varphi^{n}(x), \varphi^{n}(y)) \rightarrow 0$ as $n \rightarrow +\infty \ (-\infty)$.  
The collection of sets $\{ X^s(y,\delta) \ | \ y \in X^s(x), \ \delta > 0 \}$ forms a neighbourhood base for a topology on 
$X^s(x)$ that is locally compact and Hausdorff.  This is the topology that we use on $X^s(x)$ (not the relative topology from $X$). 
 There is an analogous topology on $X^u(x)$.
The global stable (unstable) sets partition the Smale space $X$ into equivalence classes.
 In other words, there are three equivalence relations defined on $X$.  We say $x$ and $y$ are \emph{stably equivalent} if $X^s(x) = X^s(y)$, 
\emph{unstably equivalent} if $X^u(x) = X^u(y)$, and \emph{homoclinic} if they are both stably and unstably equivalent. 
 Finally, we say that a point $z$ is a heteroclinic point for the pair $(x,y)$ if $z$ is 
stably equivalent to $x$ and unstably equivalent to $y$ (i.e. $z \in X^s(x) \cap X^u(y)$).

For an irreducible Smale space, $(X,\varphi)$, there is a unique $\varphi$-invariant probability measure maximizing the entropy 
of $\varphi$ \cite{ruellesullivan}, \cite{katokhass}.  This measure is known as the Bowen measure and we denote it by $\mu_X$, 
or when the space is obvious, simply $\mu$.  

In \cite{bowen71}, Bowen constructed the measure of maximum entropy as a limit of measures supported on periodic points. 
Our main goal in this paper is to present an alternative construction in which the Bowen measure is obtained as the limit of measures 
supported on heteroclinic points. The main result is Theorem \ref{BowenHomIrred}, which is proved in section 4.  From our 
construction we are also able to relate the growth rate of certain sets of heteroclinic points to the topological entropy 
of the Smale space.  A similar result concerning the growth rate of homoclinic orbits was proved by Mendoza in \cite{mendoza89}, 
using different techniques.  
%We conclude by recovering Mendoza's result from our results.

\section{Main Results}\label{BowenHeteroSec}

%In this section we describe a construction of the Bowen measure for an irreducible Smale space from homoclinic points. The construction is reminiscent of Bowen's construction from periodic points in \cite{bowen71}, but we were unable to find this construction in the literature.  
In \cite{bowen71} the unique entropy maximizing $\varphi$-invariant probability measure is constructed as the weak-$\ast$ limit of 
the sequence $\mu_n$, where $\mu_n$ is defined as follows.  Let $S_n = \cup_1^nPer_n(X,\varphi)$ then
$$
\mu_n = \frac{1}{\#S_n}\sum_{z \in S_n}\delta_z,
$$
where $\delta_z$ is the point mass at $z$. In our construction we use points which are heteroclinic to a given pair of points instead of periodic points.  It is worth noting 
that in Bowen's construction each $\mu_n$ is a $\varphi$-invariant probability measure.  In our case, the measures constructed are 
not $\varphi$-invariant, but in the limit we recover $\varphi$-invariance. 

%We first prove the result for a mixing Smale Space, then use Smale's spectral decomposition result (\cite{ruelle78}, \cite{smale67}) to prove it for irreducible Smale space.
%We now state our main result in the case of a \emph{mixing} Smale space.  We prove this result in section 2.  In section 3 we extend this result to \emph{irreducible} Smale space.

\begin{definition} \label{HomMeas}
Let $(X,\varphi)$ be a mixing Smale space, $x,\ y \in X$, $B \subset X^u(x)$ and $C \subset X^s(y)$ open with compact closure. For each positive integer $k$, we define 
$$
h_{B,C}^k = \varphi^{k}(B) \cap \varphi^{-k}(C)
$$
and the measure
$$
\mu_{B,C}^k = \frac{1}{\#h_{B,C}^k}\sum_{z\in h_{B,C}^k}\delta_z.
$$
\end{definition}
\begin{remark}
\begin{itemize}
\item As $X^u(x)$ and $X^s(y)$ intersect transversally and $\varphi^{k}(B)$ and $\varphi^{-k}(C)$ have compact closure for each
 $k$, $\# h_{B,C}^k$ is finite for each $k$.
\item  $h_{B,C}^k$ may be empty, and hence $\mu_{B,C}^k$ may not be well defined for some positive integers $k$. However, 
for given $B,\ C$ there exists a $K$ such that for all $k>K$, $\mu_{B,C}^k$ is well defined. Since we will be 
interested in the (weak-$\ast$) limit of these measures as $k \rightarrow \infty$ we will not be concerned with the finite number of $k$'s for which our definition is not valid.
\end{itemize}
\end{remark}

We have the following result relating the growth of the heteroclinic sets $h^k_{B,C}$ to the topological entropy.

\begin{thrm}\label{ratiolimitSmSp}
Let $(X, \varphi)$ be a mixing Smale space, $B$, $C$ as in Defn. \ref{HomMeas}. Then we have
$$
\lim_{k \rightarrow \infty} \lambda^{-2k} \# h^k_{B,C} = \mu^u(B)\mu^s(C),
$$
where $\log(\lambda) = h(X, \varphi)$ is the topological entropy of $(X,\varphi)$.
In consequence, we also have 
$$
\lim_{k \rightarrow \infty} \frac{\log(\#h^k_{B,C})}{2k} = h(X, \varphi).
$$
\end{thrm}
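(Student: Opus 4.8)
The plan is to follow the strategy announced in the abstract: establish the estimate by an explicit Perron--Frobenius computation when $(X,\varphi)$ is a shift of finite type, and then transport it to a general mixing Smale space through a resolving factor map. For the SFT case $(\Sigma,\sigma)$ with adjacency matrix $A$, mixing makes $A$ primitive, so $h(\Sigma,\sigma)=\log\lambda$ with $\lambda$ the Perron eigenvalue, and the Bowen (Parry) measure disintegrates into transverse measures $\mu^u,\mu^s$ whose local values are the entries of the right and left Perron eigenvectors $r,l$, normalized by $\sum_i l_i r_i = 1$ so that $\mu_\Sigma$ is a probability measure. In the basic case where $B$ is the local unstable set $\{v : v_n = x_n,\ n\le 0\}$ and $C$ the local stable set $\{v : v_n = y_n,\ n\ge 0\}$, a direct bookkeeping of coordinates shows that a point of $h^k_{B,C}=\sigma^k(B)\cap\sigma^{-k}(C)$ is forced to equal $\sigma^k x$ on $(-\infty,-k]$ and $\sigma^{-k}y$ on $[k,\infty)$ and is a free admissible path on the intermediate window, so that $\#h^k_{B,C} = (A^{2k})_{x_0,y_0}$. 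By Perron--Frobenius, $(A^{2k})_{x_0 y_0}\sim \lambda^{2k} r_{x_0} l_{y_0}$, and since $\mu^u(B)=r_{x_0}$ and $\mu^s(C)=l_{y_0}$ under the above normalization, this is exactly $\lambda^{-2k}\#h^k_{B,C}\to \mu^u(B)\mu^s(C)$.

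Next I would upgrade from these basic sets to arbitrary open $B,C$ with compact closure. The same coordinate count applied to a cylinder that additionally pins a forward word $w_1\cdots w_m$ in $B$ and a backward word $z_{-m'}\cdots z_{-1}$ in $C$ produces $(A^{2k-m-m'})_{w_m,z_{-m'}}$, whose $\lambda^{2k}$-asymptotics again match $\mu^u(B)\mu^s(C)$ after using the scaling relations $\mu^u(\sigma E)=\lambda\,\mu^u(E)$ and $\mu^s(\sigma E)=\lambda^{-1}\mu^s(E)$. Since $h^k_{B,C}$ is additive under disjoint unions of $B$ (and of $C$) and the cylinders form a neighbourhood base, I would sandwich a general precompact open $B$ between finite unions of cylinders $U\subset B\subset V$ with $\mu^u(V\setminus U)$ arbitrarily small by regularity of the Radon measure $\mu^u$, and likewise for $C$; the finite-union asymptotics then pinch $\lambda^{-2k}\#h^k_{B,C}$ to $\mu^u(B)\mu^s(C)$.

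Finally I would pass to a general mixing Smale space using a finite-to-one resolving factor map $\pi\colon (\Sigma,\sigma)\to(X,\varphi)$ from a mixing SFT. Such a map preserves topological entropy, so the same $\lambda$ governs both systems, and it sends stable (unstable) sets into stable (unstable) sets, with its restriction to the leaves in the resolved direction injective and the set of points admitting several $\pi$-preimages forming a proper subsystem of strictly smaller entropy. Lifting $x,y$ to $\tilde x,\tilde y$ and taking $\tilde B,\tilde C$ to be the leafwise preimages of $B,C$, I would argue that $\mu^u_X(B)=\mu^u_\Sigma(\tilde B)$ and $\mu^s_X(C)=\mu^s_\Sigma(\tilde C)$, since $\pi$ is a leafwise local homeomorphism and the transverse measures push forward, and that $h^k_{\tilde B,\tilde C}$ maps onto $h^k_{B,C}$ with multiplicity one off the branch locus. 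I expect the main obstacle to be exactly this transfer: one must show that the heteroclinic points meeting the multiple-preimage locus number only $o(\lambda^{2k})$, so they do not perturb the limit, and that the push-forward identities hold with the normalization inherited from $\mu_X$ being a probability measure. Granting this, the SFT result yields the main formula for $(X,\varphi)$. The entropy consequence is then immediate: since $B,C$ are nonempty and $\mu^u,\mu^s$ have full support, the limit $L=\mu^u(B)\mu^s(C)$ lies in $(0,\infty)$, whence $\log\#h^k_{B,C}=2k\log\lambda+\log L+o(1)$, and dividing by $2k$ gives $h(X,\varphi)$.
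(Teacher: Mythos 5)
Your SFT argument is essentially the paper's own (Lemmas \ref{ratiolimit1}--\ref{ratiolimit3}): the cylinder count via powers of $A$, the Perron--Frobenius asymptotics, and the clopen sandwich for precompact open sets are all correct. The gap is in the final transfer step. You propose to pass to a general mixing Smale space via ``a finite-to-one resolving factor map $\pi\colon(\Sigma,\sigma)\to(X,\varphi)$ from a mixing SFT,'' but such a map does not exist in general. By the proposition quoted from \cite{putnam00}, an almost one-to-one $u$-resolving (resp.\ $s$-resolving) factor map restricts to a \emph{homeomorphism} from unstable (resp.\ stable) sets of the domain onto those of the range; the leaves of an SFT are totally disconnected, so if $X$ is, say, a hyperbolic toral automorphism, whose leaves are connected and nondegenerate, no resolving factor map from any SFT onto $X$ can exist. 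This is exactly why the paper invokes Putnam's factorization theorem (Cor.~1.4 of \cite{putnam05}): Bowen's almost one-to-one map factors as $\Sigma\to Y\to X$ with $\pi_1$ $s$-resolving and $\pi_2$ $u$-resolving, where the intermediate Smale space $Y$ is in general \emph{not} an SFT; accordingly the transfer statement (Prop.~\ref{ratiolimitResolving}) must be formulated for a resolving map between arbitrary mixing Smale spaces and applied twice.

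Even granting a resolving map, your transfer mechanics leave the decisive counting issue open, and your proposed fix points in a harder direction than necessary. You take ``leafwise preimages'' $\tilde B,\tilde C$ and flag as the main obstacle that heteroclinic points meeting the branch locus must number $o(\lambda^{2k})$. The paper needs no such error estimate, because it makes an asymmetric choice: for $\pi$ $u$-resolving, take $B'$ a \emph{single-leaf} lift of $B$ (so $\pi\colon B'\to B$ is a bijection, by the resolving property) but take $C'=\pi^{-1}(C)$ the \emph{full} preimage, which by Lemma \ref{finitepreimage} decomposes into finitely many leaf pieces $C'_i$. Then each $z\in h^k_{B,C}$ has exactly one preimage in $\psi^k(B')$, and that preimage lies in $\psi^{-k}(C')$ automatically because $C'$ is a full preimage; hence $\#h^k_{B,C}=\sum_i\#h^k_{B',C'_i}$ \emph{exactly}, while $\mu^{s}_X(C)=\sum_i\mu^{s}_Y(C'_i)$ holds by definition (Defn.~\ref{SUmeasures}). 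In particular your push-forward identity $\mu^s_X(C)=\mu^s_\Sigma(\tilde C)$ for a single leafwise preimage is wrong whenever $\pi^{-1}(C)$ meets several stable leaves; the correct statement is the finite sum. A branch-locus argument would also have to control \emph{counts} of heteroclinic points, not just measures (Lemma \ref{resolve1to1} gives only measure zero), which is not obviously easier than the theorem you are trying to prove. A last, minor point: the transfer requires $\pi$ to be almost one-to-one, not merely finite-to-one, since the identity $\mu_X=\mu_Y\circ\pi^{-1}$ and the definitions of the leaf measures rest on that hypothesis.
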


\begin{thrm}\label{BowenHomMix}
Let $(X,\varphi)$ be a mixing Smale space, and let $\mu^k_{B,C}$ be as in Defn. \ref{HomMeas}. For each continuous function $f: X \rightarrow \mathbb{C}$ we have
$$
\lim_{k \rightarrow \infty} \int_X f d\mu_{B,C}^k = \int_X f d\mu_X,
$$
where $\mu_X$ is the Bowen measure. In other words $\mu_{B,C}^k \rightarrow \mu_X$ in the weak-$\ast$ topology.
\end{thrm}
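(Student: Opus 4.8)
The plan is to follow the two-step strategy announced in the abstract: prove the statement by explicit computation for a shift of finite type, and then push it through resolving factor maps to a general mixing Smale space. Since the $\mu^k_{B,C}$ and $\mu_X$ are probability measures on the compact space $X$, weak-$\ast$ convergence is equivalent to convergence of integrals against any convergence-determining family of functions. In the shift of finite type $(\Sigma,\sigma)$ I would take this family to be the indicators of cylinder sets, which generate the topology and whose boundaries are null for the (non-atomic) Bowen measure, so that establishing $\mu^k_{B,C}(E)\to\mu_\Sigma(E)$ for every cylinder $E$ suffices.

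First I would refine the counting argument already used for Theorem \ref{ratiolimitSmSp}. Fix a cylinder $E$ determined by the coordinates in a window $[-M,M]$. A point $z\in h^k_{B,C}$ is a bi-infinite admissible path whose far-negative coordinates are prescribed by membership in $\varphi^{k}(B)\subset X^u(\varphi^{k}x)$ and whose far-positive coordinates are prescribed by membership in $\varphi^{-k}(C)\subset X^s(\varphi^{-k}y)$; for $k$ large these two constraints sit on opposite sides of the window. Hence $\#(h^k_{B,C}\cap E)$ factors as (number of admissible paths joining the $B$-data on the left to the left edge of $E$), times the fixed transitions inside $E$, times (number of admissible paths joining the right edge of $E$ to the $C$-data on the right). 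Writing $A$ for the adjacency matrix and applying the Perron--Frobenius asymptotics $A^{n}\sim\lambda^{n}r\ell^{\mathsf T}$, each of the two outer factors is, to leading order, a power of $\lambda$ times a product of entries of the left and right eigenvectors. Forming the ratio $\#(h^k_{B,C}\cap E)/\#h^k_{B,C}$, the exponential factors $\lambda^{2k}$ and all of the $B,C$-dependent eigenvector data cancel, and what remains is exactly the Parry (Bowen) measure of $E$; this is most transparently phrased as the statement that the limit measure inherits the local product structure $\mu^u\times\mu^s$ that characterises $\mu_\Sigma$. Combined with Theorem \ref{ratiolimitSmSp} for the normalisation, this yields $\mu^k_{B,C}(E)\to\mu_\Sigma(E)$ and hence the mixing shift of finite type case.

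The transfer to a general mixing Smale space is where I expect the real difficulty to lie. I would use the fact that $(X,\varphi)$ is a factor of a mixing shift of finite type by a resolving factor map $\pi$, say $s$-resolving (injective on each stable set), finite-to-one and of bounded multiplicity. Such a map carries the Bowen measure of the shift of finite type to the Bowen measure $\mu_X$ — this follows from preservation of entropy together with the uniqueness of the maximizing measure — and, because it sends stable and unstable equivalence to stable and unstable equivalence, it maps heteroclinic points to heteroclinic points and the sets $h^k_{B,C}$ upstairs to the corresponding sets downstairs. The obstacle is to show that the finite but varying multiplicities of $\pi$ do not distort the normalised counting measures in the limit: one must verify that the over-counting is supported on the set where $\pi$ fails to be injective, which is meagre and $\mu_X$-null, so that $\pi_{\ast}\mu^k_{B,C}$ and the downstairs measure $\mu^k_{\pi(B),\pi(C)}$ share the same weak-$\ast$ limit. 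Making this precise — tracking how the open sets $B,C$, their images, and the resolving property interact, and confirming that the boundary and multiplicity effects vanish against the non-atomic Bowen measure — is the crux of the argument, after which the shift of finite type case transports to give $\mu^k_{B,C}\to\mu_X$ in general.
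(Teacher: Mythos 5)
Your overall architecture (explicit Perron--Frobenius computation for a mixing SFT, then transfer through resolving factor maps) is the paper's, and your SFT counting argument is essentially Proposition \ref{BowenHomSFTMix}. However, the transfer step --- which you yourself identify as the crux --- has two genuine gaps. First, your starting point is not available: a mixing Smale space is in general \emph{not} the image of a mixing SFT under a single resolving factor map. What Cor.~1.4 of \cite{putnam05} provides is a factorization through an intermediate Smale space, with $\pi_1 \colon \Sigma \rightarrow Y$ $s$-resolving and $\pi_2 \colon Y \rightarrow X$ $u$-resolving, both almost one-to-one. Consequently the transfer lemma must be formulated for a resolving map between two arbitrary mixing Smale spaces (not just SFT $\rightarrow$ Smale space) and applied twice; this is exactly the role of Proposition \ref{BowenHomResMix}. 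This also forces the SFT result to be proved for arbitrary open sets $B$, $C$ with compact closure (Lemmas \ref{ratiolimit1}--\ref{ratiolimit3} and Proposition \ref{BowenHomSFTMix2}), not only for cylinders as in your sketch, because the sets that arise when transferring from $Y$ or $X$ are never cylinders.

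Second, and more seriously, your mechanism for controlling multiplicities cannot work as stated. You propose to push the counting measures forward and argue that over-counting is confined to the non-injectivity locus of $\pi$, which is $\mu_X$-null. But $\mu^k_{B,C}$ is an atomic measure on a finite set of heteroclinic points: knowing that a set is null for the \emph{limit} measure $\mu_X$ says nothing a priori about what fraction of these finitely many atoms lies in it, and drawing that conclusion would require precisely the equidistribution you are trying to prove --- the argument is circular. The paper avoids multiplicity issues entirely by lifting rather than projecting. Given $B \subset X^u(x_1)$ and $C \subset X^s(x_2)$ downstairs, take $B' \subset Y^u(y_1)$ to be a homeomorphic section of $B$ (possible because a $u$-resolving $\pi$ restricts to a homeomorphism of global unstable sets) and take $C' = \pi^{-1}(C)$ to be the \emph{full} preimage, decomposed by Lemma \ref{finitepreimage} into finitely many pieces $C'_i \subset Y^s(y_{2,i})$. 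Every point of $h^k_{B',C'}$ then lies in a single global unstable set, so the $u$-resolving property makes $\pi$ injective on $h^k_{B',C'}$, while taking the full preimage $C'$ makes $\pi(h^k_{B',C'}) = h^k_{B,C}$ surjective; hence $\# h^k_{B',C'} = \# h^k_{B,C}$ exactly, with no error term to control. The downstairs limit is then assembled from the hypothesis on $Y$ applied to each pair $(B', C'_i)$, together with Theorem \ref{ratiolimitSmSp}, which identifies the limiting weights $\# h^k_{B',C'_i} / \# h^k_{B',C'}$. To repair your proof you would need to replace the null-set argument by this lifting construction, or by some other device producing an exact bijection between the upstairs and downstairs heteroclinic sets.
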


Now suppose $(X,\varphi)$ is an irreducible Smale space (not necessarily mixing). 
 By Smale's spectral decomposition\cite{smale67} we can find a partition of $X$ into pairwise disjoint clopen subsets, $X_{1}, X_{2}, \ldots, X_{I}$ such that
$\varphi(X_{i}) = X_{i+1}$ (with the indices interpreted modulo $I$) and $\varphi^{I} | X_{i}$ mixing, for each $i$.

\begin{definition}\label{HomMeasIrred}
With the notation as above, let $x, y$ be in the same component, $X_{i_{0}}$, of $X$ and let
 $B \subset X^{u}(x)$ and $C \subset X^{s}(y)$ be open with compact closures. For each $k$, we define 
\[
 h^{k}_{B,C} =  \cup_{i=0}^{I-1} ( \varphi^{kI+i}(B)  \cap   \varphi^{-kI+i}(C) ) 
\]
and the measure
\[
 \mu^{k}_{B,C} = \frac{1}{\#  h^{k}_{B,C} } \sum_{z \in  h^{k}_{B,C} } \delta_{z}.
\]
\end{definition}

\begin{remark}
\begin{itemize}
\item The same remark concerning $ h^{k}_{B,C}$ being empty as before applies.
\item In the case that $(X, \varphi)$ is mixing (and $I=1$), this clearly reduces to the same definition as before.
\end{itemize}
\end{remark}

With this extended definition, the analogous results as stated above for the mixing case also hold in the irreducible case.

\begin{thrm}\label{ratiolimitSmSpIrred}
Let $(X, \varphi)$ be an irreducible Smale space, $B$, $C$ as in Defn. \ref{HomMeasIrred}. Then we have
$$
\lim_{k \rightarrow \infty} \lambda^{-2kI} \# h^k_{B,C} = I \mu^u(B)\mu^s(C),
$$
where $\log(\lambda) = h(X, \varphi)$ is the topological entropy of $(X,\varphi)$.
In consequence, we also have 
$$
\lim_{k \rightarrow \infty} \frac{\log(\#h^k_{B,C})}{2kI} = h(X, \varphi).
$$
\end{thrm}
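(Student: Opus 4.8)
The plan is to reduce the irreducible statement to the mixing statement (Theorem \ref{ratiolimitSmSp}) applied to each component of Smale's spectral decomposition. Write $Y_i = X_{i_0 + i}$ for $i = 0, \ldots, I-1$ (indices mod $I$) and $\psi = \varphi^I$, so that each $(Y_i, \psi)$ is a mixing Smale space. The first observation is that the union defining $h^k_{B,C}$ is disjoint and component-wise: since $B \subset X^u(x)$ and $C \subset X^s(y)$ with $x, y \in X_{i_0}$, one checks $\varphi^{kI+i}(B) \subset X_{i_0+i}$ and $\varphi^{-kI+i}(C) \subset X_{i_0+i}$, so the $i$-th set lies entirely in $Y_i$, and distinct $i$ give distinct (hence disjoint) components. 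Consequently $\# h^k_{B,C} = \sum_{i=0}^{I-1} \#(\varphi^{kI+i}(B) \cap \varphi^{-kI+i}(C))$, and it suffices to analyse each summand separately.

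Next I would recognise each summand as a mixing heteroclinic count. Writing $B_i = \varphi^i(B)$ and $C_i = \varphi^i(C)$, the identities $\varphi^{kI+i}(B) = \psi^k(B_i)$ and $\varphi^{-kI+i}(C) = \psi^{-k}(C_i)$ identify the $i$-th summand with the set $h^k_{B_i, C_i}$ computed inside $(Y_i, \psi)$. Here one must check that $B_i$ and $C_i$ are legitimate data for Definition \ref{HomMeas} in the subsystem: that $X^u(\varphi^i x) \subset Y_i$ and coincides with the $\psi$-unstable set of $\varphi^i x$ (and dually for $C_i$), which follows from uniform hyperbolicity, the full backward-orbit limit being governed by its subsequence along multiples of $I$. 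Applying Theorem \ref{ratiolimitSmSp} to $(Y_i, \psi)$ gives
\[
\lim_{k \to \infty} \Lambda^{-2k}\, \# h^k_{B_i, C_i} = \mu^u_{Y_i}(B_i)\, \mu^s_{Y_i}(C_i),
\]
where $\log \Lambda = h(Y_i, \psi)$. The entropy bookkeeping is then key: $h(Y_i, \varphi^I) = h(X, \varphi^I) = I\, h(X, \varphi) = I \log \lambda$, using that $\varphi$ conjugates the components so they share entropy, that the entropy of a map on a disjoint union of invariant clopen sets is the maximum over pieces, and that $h(\varphi^I) = I\, h(\varphi)$. Hence $\Lambda = \lambda^I$ and $\Lambda^{-2k} = \lambda^{-2kI}$, matching the normalisation in the statement.

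It remains to sum the $I$ limits and identify the total. Summing,
\[
\lim_{k \to \infty} \lambda^{-2kI}\, \# h^k_{B,C} = \sum_{i=0}^{I-1} \mu^u_{Y_i}(\varphi^i B)\, \mu^s_{Y_i}(\varphi^i C).
\]
I would then argue that each term equals $\mu^u(B)\,\mu^s(C)$. This uses two ingredients: first, that the subsystem measures $\mu^u_{Y_i}, \mu^s_{Y_i}$ agree with the restrictions of the global Bowen components $\mu^u, \mu^s$ (both are unstable/stable measures scaling by $\Lambda = \lambda^I$ under $\psi$, so they coincide under the chosen normalisation); second, the scaling relations $\mu^u(\varphi A) = \lambda\, \mu^u(A)$ and $\mu^s(\varphi A) = \lambda^{-1}\, \mu^s(A)$, which give $\mu^u(\varphi^i B)\,\mu^s(\varphi^i C) = \lambda^i \lambda^{-i}\, \mu^u(B)\,\mu^s(C) = \mu^u(B)\,\mu^s(C)$, the $i$-dependence cancelling. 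Summing the $I$ equal terms yields $I\,\mu^u(B)\,\mu^s(C)$, as claimed. The second conclusion follows by taking logarithms: $\log \# h^k_{B,C} = 2kI \log\lambda + \log(I\,\mu^u(B)\,\mu^s(C)) + o(1)$, so division by $2kI$ gives $h(X,\varphi)$ in the limit.

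The main obstacle I anticipate is the measure-matching step: verifying that the unstable and stable measures built intrinsically for the mixing subsystem $(Y_i, \varphi^I)$ coincide with the restrictions of the global components $\mu^u, \mu^s$ under consistent normalisations, and that these components scale by exactly $\lambda^{\pm 1}$ under $\varphi$. Everything else — the disjointness decomposition, the entropy computation $\Lambda = \lambda^I$, and the cancellation of the $i$-dependence — is routine once these measure-theoretic facts are in hand.
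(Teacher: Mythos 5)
Your proposal is correct and follows essentially the same route as the paper's own proof: decompose $h^k_{B,C}$ over the spectral components, apply Theorem \ref{ratiolimitSmSp} to each mixing system $(X_i,\varphi^I)$ with entropy $I\log\lambda$, and use the scaling relations $\mu^u\circ\varphi = \lambda\mu^u$, $\mu^s\circ\varphi = \lambda^{-1}\mu^s$ to cancel the $i$-dependence and sum the $I$ equal terms. The measure-matching point you flag as the main obstacle is in fact glossed over by the paper (it silently identifies the stable/unstable components for $(X_i,\varphi^I)$ with the restrictions of those for $(X,\varphi)$), so your explicit attention to it is, if anything, more careful than the published argument.
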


\begin{remark}
Theorem 3.1 in \cite{mendoza89} is essentially this result, replacing $h^k_{B,C}$ with $\varphi^k(h^k_{B,C})$ in the case that the heteroclinic points happen to be homoclinic points.
\end{remark}

\begin{thrm}\label{BowenHomIrred}
Let $(X,\varphi)$ be an irreducible Smale space, and let $\mu^k_{B,C}$ be as in Defn. \ref{HomMeasIrred}. For each continuous function $f: X \rightarrow \mathbb{C}$ we have
$$
\lim_{k \rightarrow \infty} \int_X f d\mu_{B,C}^k = \int_X f d\mu_X,
$$
where $\mu_X$ is the Bowen measure. In other words $\mu_{B,C}^k \rightarrow \mu_X$ in the weak-$\ast$ topology.
\end{thrm}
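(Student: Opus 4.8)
The plan is to reduce the irreducible case to the already-established mixing case, Theorem \ref{BowenHomMix}, by means of the spectral decomposition $X = X_1 \sqcup \cdots \sqcup X_I$ on which $\psi := \varphi^{I}$ restricts to a mixing homeomorphism of each clopen piece. First I would observe that the union defining $h^{k}_{B,C}$ is in fact a \emph{disjoint} union indexed by the components. Since $x,y \in X_{i_0}$, the set $\varphi^{kI+i}(B)$ lies in $X^{u}(\varphi^{kI+i}(x)) \subset X_{i_0+i}$ and $\varphi^{-kI+i}(C)$ lies in $X^{s}(\varphi^{-kI+i}(y)) \subset X_{i_0+i}$ (indices mod $I$), so the $i$-th term
$$
P_{i}^{k} := \varphi^{kI+i}(B) \cap \varphi^{-kI+i}(C)
$$
is contained in the single component $X_{i_0+i}$, and distinct terms lie in distinct (hence disjoint) components. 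Because $\varphi^{i}$ commutes with $\psi$, I can rewrite $P_{i}^{k} = \psi^{k}(\varphi^{i}(B)) \cap \psi^{-k}(\varphi^{i}(C))$, which is precisely the heteroclinic set $h^{k}_{\varphi^{i}(B),\,\varphi^{i}(C)}$ computed inside the mixing Smale space $(X_{i_0+i},\psi)$.

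To make this identification rigorous I would verify that $\varphi^{i}(B)$ is open with compact closure in $X^{u}(\varphi^{i}(x))$, and $\varphi^{i}(C)$ likewise in $X^{s}(\varphi^{i}(y))$, using that $\varphi^{i}$ is a homeomorphism of the locally compact global stable and unstable sets, and that the global unstable (stable) set of a point of $X_{j}$, together with its intrinsic locally compact topology, is unchanged when $\varphi$ is replaced by $\psi = \varphi^{I}$. With this in hand, Theorem \ref{BowenHomMix} applies in each mixing component and gives, writing $\nu_{i}^{k}$ for the normalized counting measure on $P_{i}^{k}$,
$$
\nu_{i}^{k} \longrightarrow \mu_{X_{i_0+i}} \quad \text{(weak-}\ast\text{)},
$$
where $\mu_{X_{j}}$ denotes the Bowen measure of the mixing system $(X_{j},\psi)$.

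The next step is to control the weights, and here the key point is that the cardinalities $\#P_{i}^{k}$ are all equal: the homeomorphism $\varphi$ carries $P_{i}^{k}$ bijectively onto $P_{i+1}^{k}$ for $0 \le i \le I-2$, so $\#P_{0}^{k} = \cdots = \#P_{I-1}^{k}$ for every $k$ for which these sets are nonempty. Consequently the weights in
$$
\mu^{k}_{B,C} = \sum_{i=0}^{I-1} \frac{\#P_{i}^{k}}{\#h^{k}_{B,C}}\, \nu_{i}^{k}
$$
are identically $1/I$, and passing to the limit using the componentwise convergence above yields
$$
\mu^{k}_{B,C} \longrightarrow \frac{1}{I}\sum_{i=0}^{I-1} \mu_{X_{i_0+i}} = \frac{1}{I}\sum_{j=1}^{I}\mu_{X_{j}}.
$$

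It remains to identify $\tfrac1I \sum_{j} \mu_{X_{j}}$ with the Bowen measure $\mu_{X}$ of $(X,\varphi)$, which I would do through the uniqueness characterization of the entropy-maximizing measure. The right-hand side is a $\varphi$-invariant probability measure: it has total mass $1$, and it is invariant because the conjugacy $\varphi \colon (X_{j},\psi) \to (X_{j+1},\psi)$ pushes $\mu_{X_{j}}$ to $\mu_{X_{j+1}}$, cyclically permuting the summands. Using affinity of entropy together with the power rules $h_{\nu}(\varphi^{I}) = I\,h_{\nu}(\varphi)$ and $h(X_{j},\psi) = I\,h(X,\varphi)$, one checks that this measure attains entropy $h(X,\varphi)$, so by uniqueness it equals $\mu_{X}$. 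I expect the main obstacle to be the bookkeeping of this reduction—precisely matching the local stable/unstable structures and their intrinsic topologies for $\varphi$ and for $\psi = \varphi^{I}$, and making the decomposition $\mu_{X} = \tfrac1I\sum_{j}\mu_{X_{j}}$ fully rigorous—rather than any new dynamical input, since all of the analytic content has already been supplied by Theorem \ref{BowenHomMix}.
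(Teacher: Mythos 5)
Your proof is correct. It follows the paper's overall skeleton---spectral decomposition, componentwise application of Theorem \ref{BowenHomMix} to the mixing systems $(X_i,\varphi^I)$, then recombination---but it handles the key bookkeeping step in a genuinely different and simpler way. The paper obtains the weights $\#P_i^k/\#h^k_{B,C}$ (it writes $h_i^k$ for your $P_i^k$) only asymptotically: it applies the counting result, Theorem \ref{ratiolimitSmSp}, to each component, using the scaling relations $\mu^{u,\varphi(x)}\circ\varphi=\lambda\,\mu^{u,x}$ and $\mu^{s,\varphi(x)}\circ\varphi=\lambda^{-1}\mu^{s,x}$ to check that the factors $\lambda^{\pm(i-1)}$ cancel, so that $\lambda^{-2kI}\#h_i^k$ has the same limit $\mu^u(B)\mu^s(C)$ for every $i$, whence $\#h^k_{B,C}/\#h_i^k\to I$. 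Your observation that $\varphi$ carries $P_i^k$ bijectively onto $P_{i+1}^k$, so the $I$ cardinalities are exactly equal and the weights are identically $1/I$ for every $k$, is more elementary and decouples Theorem \ref{BowenHomIrred} from the growth-rate machinery entirely: for the reduction you need only the weak-$\ast$ convergence supplied by Theorem \ref{BowenHomMix}, not Theorem \ref{ratiolimitSmSp} or the product structure of the Bowen measure. (The paper loses nothing by its detour, since it proves Theorem \ref{ratiolimitSmSpIrred} simultaneously; in fact your exact-cardinality remark would streamline that proof too, reducing it to the single component $i=0$.) The other difference is the final identification $\mu_X=\frac{1}{I}\sum_j\mu_{X_j}$, which the paper simply asserts as the last line of its computation; your argument---$\varphi$-invariance because the conjugacy $\varphi:(X_j,\varphi^I)\to(X_{j+1},\varphi^I)$ pushes $\mu_{X_j}$ to $\mu_{X_{j+1}}$ by uniqueness, then affinity of entropy together with the power rules $h_\nu(\varphi^I)=I\,h_\nu(\varphi)$ and $h(X_j,\varphi^I)=I\,h(X,\varphi)$---supplies a detail the paper leaves to the reader.
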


\section{Resolving Factor Maps and the Bowen Measure}

It was shown in \cite{ruellesullivan} that, if a small subset of $X$ is written as a product, then
the Bowen measure on this set can be written 
 as a product measure. We will describe this result more precisely below.
However, this gives us a useful way of dealing with the Bowen measure. We will actually provide a 
new proof of the result, but along the way, we will also see how this product decomposition
is preserved under resolving maps.

First, let us give a more precise description of the product decomposition of the Bowen measure.
For each $x$ in $X$, we wish to have measures $\mu^{s,x}$ and $\mu^{u,x}$ 
defined on $X^{s}(x)$ and $X^{u}(x)$, respectively.
Secondly, the measure $\mu^{s,x}$  depends only on the stable 
equivalence class of $x$; that is, if $y$ is in $X^{s}(x)$, then 
$\mu^{s,x} = \mu^{s,y}$. (Put another way, we should be writing 
$\mu^{s,X^{s}(x)}$, but that notation is rather too clumsy.)
A similar statement holds for $\mu^{u,x}$. 
It is worth noting that these measures are not finite, but are regular Borel measures.
Moreover, these satisfy the following conditions.
\begin{enumerate}
\item For all $x$ in $X$, $\epsilon > 0$ and Borel sets 
$B \subset X^{u}(x, \epsilon)$ and $C \subset X^{s}(x, \epsilon)$, we have 
\[
 \mu([B, C]) = \mu^{u,x}(B)\mu^{s,x}(C)
\]
whenever $\epsilon$ is sufficiently small so that $[B, C]$ is defined.
\item For $x$, $y$ in $X$, $\epsilon > 0$ and  a Borel set $B \subset X^{u}(x, \epsilon)$, we have 
\[
 \mu^{u,y}([B, y])  = \mu^{u,x}(B),
\]
whenever $d(x,y)$ and $\epsilon$ are sufficiently small so that $[B, y]$ is defined.
\item For $x$, $y$ in $X$, $\epsilon > 0$ and  a Borel set $C \subset X^{s}(x, \epsilon)$, we have 
\[
 \mu^{s,y}([y, C])  = \mu^{s,x}(C),
\]
whenever $d(x,y)$ and $\epsilon$ are sufficiently small so that $[y, C]$ is defined.
\item $\mu^{s,\varphi(x)} \circ \varphi = \lambda^{-1} \mu^{s,x}$.
\item $\mu^{u,\varphi(x)} \circ \varphi = \lambda \mu^{u,x}$.
\end{enumerate}
Here $\log(\lambda)$ is the topological entropy of $(X,\varphi)$.

In the case that the Smale Space is  a shift of finite type (SFT), the Bowen measure is the same as the Parry measure. 
We  present a brief description of the Parry measure for a \emph{mixing} SFT, showing the above result 
for this case. 

Let $(\Sigma, \sigma)$ be a mixing SFT, considered as the edge shift on a directed graph $G$ with adjacency matrix $A$.  
%Then there exists a directed graph $G = (V,E)$, such that $A$ is the adjacency matrix of $G$ ($A_{ij}$ is the number of directed edges from vertex $i$ to vertex $j$), and $\Sigma$ consists of bi-infinite paths in $G$.  $\Sigma = \{x = (x_i)_{-\infty}^{\infty} \ | \ x_i \in E(G), \ t(x_i) = i(x_{i+1}) \ \forall \ i \}$.  The homeomorphism $\sigma$ is simply the left shift. 
See \cite{lindmarcus} for a thorough treatment of SFTs.  $(\Sigma, \sigma)$ is mixing precisely when $A$ is primitive, i.e. when there 
exists $N$ such that, for $n \geq N$ $A^n$ is strictly positive. This allows us to use the consequence of the Perron-Frobenius 
theorem (Thm. 4.5.12 in \cite{lindmarcus}), which says $\lim_{n \rightarrow \infty}\lambda^{-n}A^n = u_ru_l$, where $u_r, u_l$ are 
the right/left Perron-Frobenius eigenvectors of the matrix $A$ normalized so that $u_lu_r = 1$, and $\lambda$ is the Perron-Frobenius 
eigenvalue. This result is critical in the proof of our main result in the case of SFTs.  
Fix $m > N$, vertices $v_i, v_j$ in the graph, and let $\xi$ be a path of length $2m$, indexed 
from $-m+1$ to $m$, originating at $v_i$ and terminating at $v_j$ ($A$ primitive guarantees such a $\xi$ exists). Consider the set
$$
\Sigma_{m,i,j}(\xi) = \{ x \in \Sigma \ | x_k = \xi_k \ \textrm{for} \ -m+1 \leq k \leq m \}.
$$
The collection of such sets, as $m$, $i$, $j$, and $\xi$ vary over all possible values, forms a base for the topology on $\Sigma$. 
 The Parry measure on such a basic set is 
$$
\mu_{\Sigma}(\Sigma_{m,i,j}(\xi)) = \lambda^{-2m}u_l(i)u_r(j).
$$
Fix $x$ in $\Sigma$ and suppose $t(x_m) = v_j$, $i(x_{-l+1}) = v_i$ consider the sets
\begin{eqnarray*}
\Sigma^u(x,2^{-m}) &=& \{ z \in \Sigma \ | \ z_k = x_k \ \forall k \leq m \}  \\
\Sigma^s(y,2^{-l}) &=& \{ z \in \Sigma \ | \ z_k = x_k \ \forall k \geq -l+1 \},
\end{eqnarray*}
These sets form a base for the topology on $\Sigma^u(x)$ (respectively $\Sigma^s(x)$) in a neighbourhood of $x$.  
Suppose now that $\Sigma^u(z,2^{-m}) \subset \Sigma^u(x,\epsilon)$ and $\Sigma^s(y,2^{-l}) \subset \Sigma^s(x,\epsilon)$
Then the stable/unstable components of the Parry measure are
\begin{eqnarray*}
\mu_{\Sigma}^{u,x}(\Sigma^u(z,2^{-m})) &=& \lambda^{-m}u_r(j) \\
\mu_{\Sigma}^{s,x}(\Sigma^s(y,2^{-l})) &=& \lambda^{-l}u_l(i)
\end{eqnarray*}

We verify that these measures do, in fact, satisfy the conditions listed above. Consider the homeomorphism 
$w \mapsto [w,x']$ from $\Sigma^u(x,\epsilon)$ to $\Sigma^u(x',\epsilon')$.  Under this map 
$$
\Sigma^u(z,2_{-m}) \mapsto \{ v \in \Sigma \ |  \ v_k = z_k \ \forall 0 \leq k \leq m, \ v_k = x'_k \ \forall k \leq 0, \} = \Sigma^u([z,x'],2^{-m}).
$$
Now 
$$
\mu_{\Sigma}^{u,x'}(\Sigma^u([z,x'],2^{-m})) = \lambda^{-m}u_r(j) = \mu_{\Sigma}^{u,x}(\Sigma^u(z,2^{-m})).
$$
Similarly, the map $w \mapsto [x',w]$ takes the measure $\mu_{\Sigma}^{s,x}$ to $\mu_{\Sigma}^{s,x'}$.  Now consider
$$
(\mu_{\Sigma}^{u,\sigma(x)}\circ \sigma)(\Sigma^u(z,2^{-m})) = \mu_{\Sigma}^{u,\sigma(x)}(\Sigma^u(\sigma(z),2^{-m+1}))
 = \lambda^{-m+1}u_r(j) = \lambda \mu_{\Sigma}^{u,x}(\Sigma^u(z,2^{-m})).
$$
Similarly, 
$$
(\mu_{\Sigma}^{s,\sigma(x)}\circ \sigma)(\Sigma^s(y,2^{-l})) = \mu_{\Sigma}^{s,\sigma(x)}(\Sigma^s(\sigma(y),2^{-l-1}))
 = \lambda^{-l-1}u_l(i) = \lambda^{-1} \mu_{\Sigma}^{s,x}(\Sigma^s(y,2^{-l})).
$$
We have thus verified that the Parry measure on a mixing SFT has local stable/unstable components that satisfy the
 conditions above (ie those in \cite{ruellesullivan}). In section 3 we re-prove the existence of these stable/unstable 
measures for a general mixing Smale space using the explicit form above in the case of a SFT, and the resolving map results of \cite{putnam05}.

In the case of a SFT, the topological entropy $h(\Sigma, \sigma) = \log(\lambda)$, where $\lambda$ is the Perron-Frobenius 
eigenvalue of the adjacency matrix associated with the SFT.  Similarly, for other Smale spaces $X$ we will write 
$\lambda$ such that $h(X, \varphi) = \log(\lambda)$.  Whenever we are talking about 2 or more Smale spaces, there will be an 
almost one-to-one factor map between them, so the entropies will be equal, hence it will be unnecessary to distinguish which space the $\lambda$ comes from. 

\begin{definition}[Fried \cite{fried87}]
A factor map $\pi: (Y,\psi) \rightarrow (X,\varphi)$ is \emph{$s$-resolving} (\emph{$u$-resolving}) if for every $y \in Y$, 
$\pi|_{Y^s(y)}$ ($\pi|_{Y^u(y)}$ respectively) is injective.
\end{definition}

We will primarily be concerned with \emph{almost one-to-one} resolving factor maps.  A factor map $\pi: (Y,\psi) \rightarrow (X,\varphi)$,
 where $(Y,\psi)$ is irreducible,  is called \emph{almost one-to-one} if there exists $x \in X$ such that $\#\pi^{-1}(x) = 1$.

In \cite{bowen70}, Bowen showed that for an irreducible Smale space, $(X,\varphi)$, there exists an irreducible SFT $(\Sigma, \sigma)$ 
and an almost one-to-one factor map $\pi: \Sigma \rightarrow X$.  Moreover, letting $E = \{x \in X \ | \ \#\pi^{-1}(x) = 1 \}$, Bowen
 showed that $\mu_{\Sigma}(\pi^{-1}(E)) = 1$. In other words, $\pi$ is one-to-one $\mu_{\Sigma}$-a.e.  It follows that for any Borel
 set $B \subset X$, $\mu_X(B) = \mu_{\Sigma}(\pi^{-1}(B))$ (Theorem 34 in \cite{bowen70}).

In Cor. 1.4 of \cite{putnam05}, the second author showed that the factor map, $\pi$, can be realized as the composition of two 
resolving factor maps. In other words, given an irreducible Smale space $(X, \varphi)$, we can find a Smale space $(Y, \psi)$, 
a SFT $(\Sigma, \sigma)$, and factor maps $\pi_1: \Sigma \rightarrow Y$, $\pi_2: Y \rightarrow X$ such that
\begin{enumerate}
\item $(\Sigma, \sigma)$ and $(Y, \psi)$ are irreducible,
\item $\pi_1$ and $\pi_2$ are almost one-to-one,
\item $\pi_1$ is $s$-resolving and $\pi_2$ is $u$-resolving.
\end{enumerate}
The Bowen measures on $X$, $Y$ can be obtained from the Bowen measure on $\Sigma$ as follows
%Let $\mu_{\Sigma}$, $\mu_{\Sigma}^s$, and $\mu_{\Sigma}^u$ be the measures defined above on the SFT.  Then 
\begin{enumerate}
\item for $E \subset Y$ the Bowen measure on $(Y, \psi)$ is $\mu_Y(E) = \mu_{\Sigma}(\pi_1^{-1}(E))$, 
\item for $F \subset X$ the Bowen measure on $(X, \varphi)$ is $\mu_X(F) = \mu_{Y}(\pi_2^{-1}(F)) =\mu_{\Sigma}((\pi_2 \circ \pi_1)^{-1}(F)) $.
\end{enumerate}
This requires only that $\pi_1$, $\pi_2$ be almost one-to-one factor maps, not that they are resolving.
We now wish to define the measures on the stable and unstable equivalence classes in $(Y, \psi)$ and $(X, \varphi)$, from $\mu_{\Sigma}^s$, $\mu_{\Sigma}^u$, $\pi_1$, and $\pi_2$.  In this case, it is not enough that the factor maps are almost one-to-one, resolving plays an important role in what follows. We begin by stating the following result which is proved in \cite{putnam00}.

\begin{prop}
Let $(Y, \psi)$ and $(X, \varphi)$ be irreducible Smale spaces, and $\pi: Y \rightarrow X$ be an almost one-to-one $u$-resolving factor map.  If $x \! \in \! X$ with $\pi^{-1}(x) \! = \! \{y_1, y_2, \ldots, y_n \}$ then 
$$
\pi^{-1}(X^u(x)) =  \bigcup_{i = 1}^n  Y^u(y_i),
$$
and the union is disjoint.  Moreover, using the topologies from the introduction, for each $1 \leq i \leq n$
$$
\pi|_{Y^u(y_i)}: Y^u(y_i) \rightarrow X^u(x)
$$
is a homeomorphism.
\end{prop}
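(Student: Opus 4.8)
The plan is to reduce everything to a single local statement and then obtain the set equality, the disjointness, and the global homeomorphism by soft arguments. The local statement I would aim for is: there exist uniform constants $\epsilon, \delta > 0$ such that for every $y \in Y$ the restriction $\pi|_{Y^u(y,\epsilon)}$ is a homeomorphism of $Y^u(y,\epsilon)$ onto $X^u(\pi(y),\delta)$. I would prove this first and organize the rest of the argument around it.

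For the local statement I would use three ingredients. First, $\pi$ is a factor map, hence uniformly continuous and equivariant ($\pi \circ \psi = \varphi \circ \pi$), and it commutes with the bracket, $\pi([a,b]) = [\pi(a),\pi(b)]$ whenever both sides are defined; together with the metric description of local unstable sets this gives $\pi(Y^u(y,\epsilon)) \subseteq X^u(\pi(y),\delta)$ for suitable constants, uniformly in $y$ by compactness. Second, injectivity of $\pi$ on $Y^u(y,\epsilon)$ is immediate from the $u$-resolving hypothesis. Third, and this is the crux, I must show that $\pi$ carries $Y^u(y,\epsilon)$ \emph{onto} a full local unstable neighbourhood $X^u(\pi(y),\delta)$, openly. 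I expect this surjectivity/openness to be the main obstacle: continuity and injectivity are essentially free, but ruling out a collapse of the unstable direction genuinely requires the Smale space structure. I would establish it by showing that a $u$-resolving factor map is open, using the local product decomposition $U(y,\epsilon)\cong Y^u(y,\epsilon)\times Y^s(y,\epsilon)$ together with the bracket identity above (which splits $\pi$ into its unstable and stable coordinate maps) and surjectivity of $\pi$ on $X$, and then upgrading the open continuous injection to a homeomorphism onto a neighbourhood by an invariance-of-domain / compactness argument on the locally compact sets $Y^u(y,\epsilon)$.

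Granting the local statement, the same lifting-and-pushing argument shows that for every $y \in Y$ the map $\pi|_{Y^u(y)}\colon Y^u(y) \to X^u(\pi(y))$ is surjective: given $x' \in X^u(\pi(y))$, for $m$ large $\varphi^{-m}(x')$ lies in $X^u(\varphi^{-m}(\pi(y)),\delta)$, so the local statement applied at $\psi^{-m}(y)$ produces a unique $w \in Y^u(\psi^{-m}(y),\epsilon)$ with $\pi(w) = \varphi^{-m}(x')$, whence $\psi^m(w) \in Y^u(y)$ and $\pi(\psi^m(w)) = x'$. Injectivity on $Y^u(y_i)$ is again the $u$-resolving hypothesis. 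Finally, since the local homeomorphisms carry the basic sets $Y^u(z,\delta)$ of the intrinsic topology to the basic sets $X^u(\pi(z),\delta)$ in both directions, $\pi|_{Y^u(y_i)}$ is a homeomorphism for the topologies described in the introduction.

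It remains to deduce the set statements. For disjointness, if $Y^u(y_i)\cap Y^u(y_j)\neq\emptyset$ then $Y^u(y_i)=Y^u(y_j)$, forcing the distinct points $y_i,y_j$ into a single unstable class on which $\pi$ is injective, yet $\pi(y_i)=\pi(y_j)=x$, a contradiction. The inclusion $\bigcup_i Y^u(y_i)\subseteq \pi^{-1}(X^u(x))$ is immediate from equivariance and the metric characterization of $X^u$. For the reverse inclusion, the key move is to apply global surjectivity not at the $y_i$ but at the point $y$ in question: if $\pi(y)\in X^u(x)$ then $X^u(\pi(y))=X^u(x)\ni x$, so $\pi|_{Y^u(y)}$ maps onto $X^u(x)$ and there is some $y''\in Y^u(y)$ with $\pi(y'')=x$; hence $y''=y_i$ for some $i$ and $y\in Y^u(y_i)$. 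This closes the argument, with the openness of the resolving map being the one genuinely technical step.
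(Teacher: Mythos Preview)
The paper does not actually prove this proposition; it merely quotes it and cites \cite{putnam00} as the source. So there is no ``paper's own proof'' to compare against beyond the fact that the result is established in that reference via the local lemmas the present paper later invokes (Lemmas~3.2 and~3.3 of \cite{putnam00}, giving $\pi([y,y'])=[\pi(y),\pi(y')]$ for close $y,y'$ and the stable-equivalence of nearby points with the same image). Your overall architecture---prove a uniform local statement, bootstrap to global surjectivity by iterating $\psi^{-1}$, then read off disjointness and the set equality---is exactly the right shape, and your arguments for disjointness and for both inclusions in the set identity are correct.

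The one genuine gap is in your justification of the local surjectivity of $\pi|_{Y^u(y,\epsilon)}$ onto $X^u(\pi(y),\delta)$. Two remarks. First, ``invariance of domain'' is not available here: local unstable sets in a Smale space need not be manifolds (already for shifts of finite type they are Cantor sets), so you cannot upgrade a continuous injection to an open map by that route. Second, your proposed proof that a $u$-resolving factor map is open uses only the bracket identity and global surjectivity of $\pi$, but those alone do not place the required preimage near the given $y$: surjectivity produces \emph{some} $\tilde y$ with $\pi(\tilde y)=x'$, possibly far from $y$, and the bracket identity then gives a preimage of $x'$ in $Y^u(\tilde y^{\,*},\epsilon)$ for a limit point $\tilde y^{\,*}\in\pi^{-1}(\pi(y))$, not necessarily in $Y^u(y,\epsilon)$. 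To close this you must use the \emph{almost one-to-one} hypothesis (together with irreducibility), not just surjectivity: at a point $x_0$ with unique preimage $y_0$ the compactness argument does force any accumulation of preimages to land at $y_0$, and the bracket identity then yields a preimage of each nearby $x'\in X^u(x_0,\delta)$ inside $Y^u(y_0,\epsilon)$; uniformity and density of such $x_0$ (from irreducibility) then propagate the local statement to all $y$. This is essentially how the cited reference proceeds, so once you repair this step your outline is complete.
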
 

%\begin{proposition}\label{s-disconnect}
%For $y \in Y$, $V^s(y)$ is totally disconnected. 
%\end{proposition}

%Recall that sets of the form $V^s(y',\epsilon)$, where $y' \in V^s(y)$, $\epsilon \leq \epsilon_Y$ form a neighbourhood base for the topology on $V^s(y)$.  Similarly, sets $V^u(y',\epsilon)$, $y' \in V^u(y)$ form a neighbourhood base for the topology on $V^u(y)$, and sets of the form $[V^u(y',\epsilon), V^s(y',\epsilon)]$ form a neighbourhood base for the topology on $Y$.  

\begin{lem} \label{resolve1to1}
Let $(Y,\psi)$ and $(X,\varphi)$ be irreducible Smale spaces, and $\pi: Y \rightarrow X$ be an almost one-to-one $u$-resolving factor map. Fix $y \in Y$, the set $\{y' \in Y^s(y) \ | \ \pi(y') = \pi(\tilde{y})  \ \textrm{for some} \ \tilde{y} \neq y' \}$ has $\mu_Y^s$ measure zero.  In other words, $\pi|_{Y^s(y)}$ is one-to-one $\mu_Y^s$ almost everywhere.
\end{lem}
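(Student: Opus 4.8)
The plan is to realize the statement as a slicing argument for the global ``doubling set'' in $Y$, where the crucial input is that this set is a union of whole unstable equivalence classes. Set $E = \{x \in X : \#\pi^{-1}(x) = 1\}$ and let $D = \pi^{-1}(X \setminus E) = \{y' \in Y : \#\pi^{-1}(\pi(y')) > 1\}$, so that the set in the statement is exactly $D \cap Y^s(y)$. Since $\pi$ is almost one-to-one, Bowen's result gives $\mu_X(E) = 1$, and because $\mu_X(F) = \mu_Y(\pi^{-1}(F))$ for almost one-to-one factor maps, we get $\mu_Y(D) = \mu_X(X \setminus E) = 0$. The difficulty is that $Y^s(y)$ is itself $\mu_Y$-null, so $\mu_Y(D)=0$ does not by itself say anything about a single stable leaf.

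The key structural step is to show that $D$ is \emph{unstably saturated}, i.e. a union of global unstable equivalence classes. First I would check this for $E$: if $x \in E$, write $\pi^{-1}(x) = \{y_0\}$; the Proposition above (with $n=1$) gives $\pi^{-1}(X^u(x)) = Y^u(y_0)$ with $\pi|_{Y^u(y_0)}$ a homeomorphism onto $X^u(x)$, so every $x' \in X^u(x)$ has a unique preimage and hence $X^u(x) \subseteq E$. Thus $X \setminus E$ is unstably saturated, and since $\pi$ maps unstable classes into unstable classes, so is $D = \pi^{-1}(X \setminus E)$.

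With saturation in hand I would invoke the product decomposition of $\mu_Y$. Fix $p \in Y^s(y)$ and choose a rectangle $N = [B, C]$ with $B = Y^u(p, \delta)$, $C = Y^s(p, \delta')$ small enough that $[B,C]$ is defined, noting $\mu_Y^{u,p}(B) > 0$. For fixed $c \in C$ the unstable slice $\{[b,c] : b \in B\}$ consists of mutually unstably equivalent points, so by saturation it lies entirely in $D$ or misses $D$ entirely; hence $D \cap N = [B, C_D]$ for the Borel set $C_D = \{c \in C : [p,c] \in D\}$. Property (1) of the product decomposition then gives $0 = \mu_Y(D \cap N) = \mu_Y^{u,p}(B)\,\mu_Y^{s,p}(C_D)$, and since $\mu_Y^{u,p}(B) > 0$ we conclude $\mu_Y^{s,p}(C_D) = 0$. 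Because $[p,c] = c$ for $c \in C$, the stable slice through $p$ is exactly $C$ and $D \cap C = C_D$; as $\mu_Y^{s,p} = \mu_Y^{s,y}$, this says $\mu_Y^{s,y}(D \cap C) = 0$, with $C$ a stable neighbourhood of $p$ in $Y^s(y)$.

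Finally, since $Y^s(y)$ is second countable in its own topology, I would cover it by countably many such neighbourhoods $C$ and use countable subadditivity to conclude $\mu_Y^{s}(D \cap Y^s(y)) = 0$. The main obstacle is precisely the unstable saturation step: it is what upgrades the ``almost every slice'' conclusion of a naive Fubini argument to a statement valid on \emph{every} stable leaf, in particular the fixed leaf $Y^s(y)$; the remainder is bookkeeping with the product structure of the Bowen measure together with the positivity of $\mu_Y^{u}$ on nonempty open sets.
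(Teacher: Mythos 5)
Your proof is correct and takes essentially the same route as the paper's: both arguments rest on the facts that $\pi$ is one-to-one $\mu_Y$-almost everywhere, that $u$-resolving forces the doubling set to be a union of unstable sets (so its trace on a rectangle has the form $[B,S]$ with $S$ in a stable slice), and that the product formula $0 = \mu_Y([B,S]) = \mu_Y^{u}(B)\,\mu_Y^{s}(S)$ together with $\mu_Y^{u}(B) > 0$ forces $\mu_Y^{s}(S) = 0$. The only differences are presentational: you justify the unstable saturation explicitly via the quoted Proposition (the $n=1$ case for $E$, then pulling back), and you spell out the countable cover of the leaf at the end, both of which the paper asserts without detail.
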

\begin{proof}
As $Y$ is compact, we may cover $Y$ with a finite number of sets of the form $U_i = [Y^u(z_i,\delta_i),Y^s(z_i,\delta_i)]$. Fix $U_i$ and $y \in U_i$, let $B_i = [Y^u(z_i,\delta_i),y]$, $C_i = [y,Y^s(z_i,\delta_i)]$, so we can write $U_i = [B_i,C_i]$. 

Let $S_i = \{y' \in Y^s(y,\epsilon) \cap C_i \ | \ \pi(y') = \pi(\tilde{y})  \ \textrm{for some} \ \tilde{y} \neq y' \}$. Since $\pi$ is $u$-resolving, the set $U_i \cap \{y' \in Y \ | \ \pi(y') = \pi(z)  \ \textrm{for some} \ z \neq y' \} = [B_i,S_i]$. Now, we know that $\pi$ is 1-to-1 $\mu_Y$ almost everywhere, so
$$
0 = \mu_Y([B_i,S_i]) = \mu_Y^{u,y}(B_i)\mu_Y^{s,y}(S_i).
$$
We also know that 
$$
0 \neq \mu_Y(U_i) = \mu_Y([B_i,C_i]) = \mu_Y^{u,y}(B_i)\mu_Y^{s,y}(C_i).
$$
So $\mu_Y^{u,y}(B_i) \neq 0$ and thus $\mu_Y^{s,y}(S_i) = 0$. The conclusion follows. 
\end{proof}

Note that the analogous result with an $s$-resolving map and $\mu_Y^u$ also holds.

\begin{lem}\label{finitepreimage}
Let $\pi: (Y,\psi) \rightarrow (X,\varphi)$ be an almost one-to-one $u$-resolving factor map. There exists a constant $M$ such that if
 $x \in X$, $C \subset X^s(x)$ open with compact closure, and $C' = \pi^{-1}(C)$, then $C' = \cup_1^mC'_i$ where the union is disjoint,
 $m \leq M$ and $C'_i \subset Y^s(y_i)$ for some $y_i \in Y$.
\end{lem}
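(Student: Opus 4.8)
The plan is to reduce the lemma to two facts: a structural identity for the preimage of a whole stable set, and a uniform bound on the cardinality of the fibres of $\pi$. Precisely, I would first show
$$
\pi^{-1}(X^{s}(x)) = \bigcup_{y_{i} \in \pi^{-1}(x)} Y^{s}(y_{i}),
$$
and then produce a constant $M$ with $\#\pi^{-1}(x) \le M$ for every $x \in X$. Granting these, the lemma is purely formal. Writing $\pi^{-1}(x) = \{ y_{1}, \ldots, y_{n} \}$ with $n \le M$, the sets $Y^{s}(y_{1}), \ldots, Y^{s}(y_{n})$ fall into $m \le n$ distinct stable equivalence classes; choosing one representative $y_{i}$ from each and setting $C'_{i} = C' \cap Y^{s}(y_{i})$ gives a disjoint family (distinct stable classes are disjoint) whose union is $C' = \pi^{-1}(C)$, because $C \subseteq X^{s}(x)$ forces $C' \subseteq \pi^{-1}(X^{s}(x))$. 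Thus $C' = \bigcup_{i=1}^{m} C'_{i}$ with $m \le M$ and $C'_{i} \subseteq Y^{s}(y_{i})$, as required; note that the bound $M$ depends only on $\pi$, not on $x$ or $C$.

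For the set identity the inclusion $\supseteq$ is immediate: by uniform continuity $\pi(Y^{s}(y_{i})) \subseteq X^{s}(\pi(y_{i})) = X^{s}(x)$. The reverse inclusion is the heart of the matter. Given $y'$ with $\pi(y') \in X^{s}(x)$, I must find a preimage of $x$ inside $Y^{s}(y')$, i.e. show $x \in \pi(Y^{s}(y'))$; since $\pi(Y^{s}(y')) \subseteq X^{s}(\pi(y')) = X^{s}(x)$, this is exactly the assertion that $\pi$ maps each stable set \emph{onto} the stable set of its image, $\pi(Y^{s}(y)) = X^{s}(\pi(y))$, a standard property of factor maps of Smale spaces. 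I would prove it locally and then globalize: the local step produces $\delta, \epsilon > 0$ with $X^{s}(\pi(y), \delta) \subseteq \pi(Y^{s}(y, \epsilon))$ for all $y$, using that $\pi$ preserves the bracket, $\pi[a,b] = [\pi a, \pi b]$; the global step, given $x' \in X^{s}(\pi(y))$, chooses $N$ so large that $\varphi^{N}(x') \in X^{s}(\pi(\psi^{N} y), \delta)$, lifts $\varphi^{N}(x')$ into $Y^{s}(\psi^{N} y, \epsilon)$ by the local statement, and applies $\psi^{-N}$ to land in $Y^{s}(y)$ over $x'$.

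For the uniform bound I would use the preceding Proposition, which (via the $u$-resolving hypothesis) shows that $\pi^{-1}(x)$ meets each unstable class $Y^{u}(y_{i})$ in exactly one point and hence is finite for every $x$. Uniform finiteness then follows from compactness of $X$ together with expansiveness of $(Y, \psi)$, which prevents the fibre sizes from growing without bound; equivalently one may simply invoke the classical fact that an almost one-to-one factor map between irreducible Smale spaces, having equal topological entropy on both sides, is bounded-to-one.

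The main obstacle is the reverse inclusion, that is, the stable-leaf surjectivity $\pi(Y^{s}(y)) = X^{s}(\pi(y))$ and especially arranging the local lifting $X^{s}(\pi(y), \delta) \subseteq \pi(Y^{s}(y, \epsilon))$ with $\delta, \epsilon$ \emph{uniform} in $y$, so that the globalization step is legitimate. The uniform finiteness of the fibres is the easier ingredient, and once both are in place the decomposition of $C'$ is immediate.
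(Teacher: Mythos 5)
Your reduction has the right formal shape, but its first pillar --- the identity $\pi^{-1}(X^{s}(x)) = \bigcup_{y_{i} \in \pi^{-1}(x)} Y^{s}(y_{i})$, equivalently the stable-leaf surjectivity $\pi(Y^{s}(y)) = X^{s}(\pi(y))$ --- is \emph{false} for $u$-resolving maps, and the failure is exactly where the resolving hypothesis cuts against you. The Proposition quoted in the paper (from \cite{putnam00}) gives this structure for \emph{unstable} leaves under a $u$-resolving map; for \emph{stable} leaves, which is what this lemma concerns, no such statement holds, and it is not ``a standard property of factor maps.'' Concretely: let $Y$ be the vertex shift on $\{a_{1}, a_{2}, b\}$ with allowed transitions $a_{1} \to a_{2}$, $a_{2} \to a_{1}$, $a_{1} \to b$, $a_{2} \to b$, $b \to a_{1}$, $b \to b$; let $X$ be the full $2$-shift on $\{a,b\}$; and let $\pi$ be the one-block code $a_{1}, a_{2} \mapsto a$, $b \mapsto b$. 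This $\pi$ is right-resolving (hence $u$-resolving and finite-to-one), onto, and almost one-to-one ($b^{\infty}$ has a unique preimage). Let $x^{*}$ be the point with $x^{*}_{0} = b$ and $x^{*}_{k} = a$ for $k \neq 0$. Its fibre consists of two points, both having the \emph{forced} forward tail $a_{1}a_{2}a_{1}a_{2}\cdots$ from coordinate $1$ on; but the period-two point $y^{*}$ with $y^{*}_{k} = a_{2}$ for $k$ odd satisfies $\pi(y^{*}) = a^{\infty} \in X^{s}(x^{*})$ while disagreeing with each fibre point of $x^{*}$ at every coordinate $k \geq 1$. So $y^{*} \in \pi^{-1}(X^{s}(x^{*}))$ but $y^{*} \notin Y^{s}(y_{i})$ for any $y_{i} \in \pi^{-1}(x^{*})$, and if $C$ is a compact open neighbourhood of $a^{\infty}$ in $X^{s}(x^{*})$, your sets $C'_{i} = C' \cap Y^{s}(y_{i})$ fail to cover $C' = \pi^{-1}(C)$. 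The same example defeats the local lifting claim $X^{s}(\pi(y),\delta) \subseteq \pi(Y^{s}(y,\epsilon))$ with uniform constants (modify $a^{\infty}$ by placing a $b$ at coordinate $-N$ with $N$ of the unfavourable parity); the underlying obstruction is that fibres of a factor map are only \emph{upper} semicontinuous, so $\pi$ need not be open, and a point close to $\pi(y)$ need not have any preimage close to $y$.

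The paper avoids this trap entirely: it never indexes the pieces by the fibre over $x$ (note the lemma asserts only $C'_{i} \subset Y^{s}(y_{i})$ for \emph{some} $y_{i} \in Y$), and its constant $M$ is not a fibre bound but a covering number. One covers $Y$ by $M$ balls $U_{i}$ of radius $\epsilon_{\pi}$, the constant of Lemma 3.2 of \cite{putnam00} below which $\pi$ intertwines brackets, picks $y_{i} \in C' \cap U_{i}$, and shows $C' \cap U_{i} \subset Y^{s}(y_{i})$: for $y \in C' \cap U_{i}$ one has $\pi([y_{i},y]) = [\pi(y_{i}), \pi(y)] = \pi(y)$, since $\pi(y_{i})$ and $\pi(y)$ are close and stably equivalent; then, because $[y_{i},y]$ lies in $Y^{u}(y,\epsilon)$ and has the same image as $y$, the $u$-resolving property (via Lemma 3.3 of \cite{putnam00}) forces $y$ and $[y_{i},y]$ to be stably equivalent, whence $y \in Y^{s}(y_{i})$; finally overlapping pieces are merged. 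Your second ingredient --- a uniform bound on $\#\pi^{-1}(x)$ --- is correct (resolving maps are bounded-to-one, by Fried), but it cannot repair the argument: in the example above the fibre of $x^{*}$ meets only one stable class while $\pi^{-1}(X^{s}(x^{*}))$ meets two, so any fix must abandon fibre-indexed pieces and localize in $Y$ as the paper does.
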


\begin{proof}
Let $\epsilon_{\pi}$ be as in Lemma 3.2 of \cite{putnam00}, and cover $Y$ with $\epsilon_{\pi}$-balls. Let $M$ be the minimum number 
of sets in such a cover, and label them $\{U_i\}_1^M$. Now choose $y_i \in C' \cap U_i$ for each $i$ such that this set is non-empty, 
and let $\tilde{C}_i = C' \cap U_i$. We show that $\tilde{C}_i \subset Y^s(y_i)$.  Suppose $y \in \tilde{C}_i$, then 
$[y_i,y] \in Y^s(y_i)\cap U_i$. Now $d(y,y_i) < \epsilon_{\pi}$, so from Lemma 3.2 of \cite{putnam00}, $\pi([y_i,y]) = [\pi(y_i),\pi(y)]$. 
 $\pi(y), \pi(y_i)$ are both in $C \subset X^s(x)$ so they are stably equivalent. Therefore $[\pi(y_i),\pi(y)] = \pi(y)$.  We have that $\pi([y_i,y]) = \pi(y)$, so by Lemma 3.3 of \cite{putnam00}, $y$ and $[y_i,y]$ are stably equivalent.  $[y_i,y]$ is also stably equivalent to $y_i$, so $y$ is stably equivalent to $y_i$ and we have shown that $\tilde{C}_i \subset Y^s(y_i)$.  

At this point the sets $\tilde{C}_i$ may not be pairwise disjoint. Suppose $y \in \tilde{C}_i \cap \tilde{C}_j$. Then 
from above $y \in Y^s(y_i)\cap Y^s(y_j)$ and thus $Y^s(y_i)= Y^s(y_j)$. We then replace these two sets with the union $\tilde{C}_i \cup \tilde{C}_j$. 
 Similarly, if $y$ is in $n$ of the $\tilde{C_i}$ sets, then all $n$ of these sets are contained in one unstable set and we union them all together.  
In this manner we arrive at a collection of sets $\{C'_i\}_1^m$ such that $m \leq M$, the $C'_i$'s are pairwise disjoint, $\cup_1^m C'_i = C'$,
 and $C'_i \subset Y^s(y_i)$ for some $y_i$.
\end{proof}

\begin{definition}\label{SUmeasures}
Let $(Y,\psi)$ and $(X,\varphi)$ be irreducible Smale spaces, and $\pi: Y \rightarrow X$ an almost one-to-one $u$-resolving factor map. 
 Let $x \in X$ and $X^s(x_1,\delta) \subset X^s(x,\epsilon)$, $X^u(x_2,\delta) \subset X^u(x,\epsilon)$.  Fix $y \in Y$ and $U(y) \subset Y^u(y)$ 
such that $\pi(y) = x_2$, $\pi(U(y)) = X^u(x_2,\delta)$.
As in Lemma \ref{finitepreimage} find points $\{y_i\}_1^m \in Y$ and sets $C_i' \subset Y^s(y_i)$ such that $\pi^{-1}(X^s(x_1,\delta)) = \bigcup_1^mC_i'$.
Define measures on $X^s(x)$, $X^u(x)$ locally by
\begin{eqnarray*}
\mu_{X}^{s,x} (X^s(x_1,\delta))&=& \sum_1^m\mu_Y^{s,y_i}(C_i') \\
\mu_{X}^{u,x} (X^u(x_2,\delta))&=& \mu_Y^{u,y}(U(y))
\end{eqnarray*}
\end{definition}

\begin{remark} 
We have stated Defn. \ref{SUmeasures} in terms of an almost one-to-one $u$-resolving factor map.  Given two Smale spaces and an almost 
one-to-one $s$-resolving factor map, we would make the analogous definition, interchanging roles of stable and unstable sets. 
\end{remark}

%Before proving that these measures satisfy the conditions outlined in the introduction, we state the following lemma.

\begin{prop} \label{resolvmeas}
Let $(Y, \psi)$, $(X,\varphi)$ be irreducible Smale spaces, and $\pi: Y \rightarrow X$ be an almost one-to-one resolving factor map
 ($s$, or $u$-resolving). Suppose $Y$ has local stable/unstable measures $\mu_Y^{s,y}$/$\mu_Y^{u,y}$ that satisfy the conditions 
above, then the measures on $X$ defined in Defn. \ref{SUmeasures} also satisfy the conditions for stable/unstable
 components of the Bowen measure as above.
\end{prop}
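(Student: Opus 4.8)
\section*{Proof proposal}

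The plan is to treat the $u$-resolving case in detail; the $s$-resolving case then follows by interchanging the roles of stable and unstable sets, exactly as in the remark following Definition \ref{SUmeasures}. Throughout I would lean on three facts already available: that $\mu_X = \mu_Y \circ \pi^{-1}$, that $\pi$ commutes with the bracket on a small scale (Lemmas 3.2--3.3 of \cite{putnam00}), and that the given families $\mu_Y^{s,\cdot}$, $\mu_Y^{u,\cdot}$ already satisfy conditions (1)--(5) on $Y$. Before checking the five conditions I would first record that the measures of Definition \ref{SUmeasures} are well defined, i.e.\ independent of the auxiliary choices (the point $y$, the set $U(y)$, the points $y_i$, and the decomposition $\cup C_i'$). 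For $\mu_X^{s,x}$ this is precisely where Lemma \ref{resolve1to1} enters: since $\pi|_{Y^s(y_i)}$ is injective off a $\mu_Y^s$-null set, two decompositions of $\pi^{-1}(X^s(x_1,\delta))$ differ only on null sets, so the sum $\sum_i \mu_Y^{s,y_i}(C_i')$ is unambiguous. For $\mu_X^{u,x}$, independence of the preimage $y$ of $x_2$ would follow from condition (2) on $Y$: two nearby preimages are stably equivalent (by the bracket computation $\pi([y,y']) = [x_2,x_2] = x_2$ together with $u$-resolvingness), and the holonomy $b \mapsto [b,y']$ carries $U(y)$ onto a lift $U(y')$ while covering $\pi$, hence preserves the unstable mass.

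With this in hand, condition (1) is the heart of the matter. I would compute $\mu_X([B,C]) = \mu_Y(\pi^{-1}([B,C]))$ by decomposing the preimage. Writing $\pi^{-1}(x) = \{y_1,\dots,y_n\}$ and taking $\epsilon$ small, every point of $[B,C]$ lies near $x$, so its preimages cluster near the $y_j$; local bracket-preservation then yields a disjoint decomposition $\pi^{-1}([B,C]) = \sqcup_j [B_j,C_j]$, where $B_j \subset Y^u(y_j)$ is the homeomorphic lift of $B$ (the Proposition preceding Lemma \ref{resolve1to1}) and $C_j \subset Y^s(y_j)$ is the full preimage of $C$. Applying condition (1) on $Y$ to each piece gives
\[
\mu_X([B,C]) = \sum_{j=1}^n \mu_Y^{u,y_j}(B_j)\,\mu_Y^{s,y_j}(C_j).
\]
By the same decomposition $\pi^{-1}(C) = \sqcup_j C_j$, so $\mu_X^{s,x}(C) = \sum_j \mu_Y^{s,y_j}(C_j)$ directly from Definition \ref{SUmeasures}. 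Thus (1) reduces to the claim that the unstable factor $\mu_Y^{u,y_j}(B_j)$ is the \emph{same} number $\mu_X^{u,x}(B)$ for every sheet $j$, so that it factors out of the sum. I expect this cross-sheet constancy to be the main obstacle, since the preimages $y_j$ need not be mutually close, so the nearby-preimage argument from the well-definedness step does not apply verbatim; the remedy is to transport the unstable measure along stable holonomies (condition (2) on $Y$), using that all of $\pi^{-1}(x)$ is stably equivalent, which itself comes from the $u$-resolving lemmas of \cite{putnam00}.

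The remaining conditions should be comparatively routine. Conditions (2) and (3), the invariance of $\mu_X^{u,x}$ and $\mu_X^{s,x}$ under the holonomies $[B,y]$ and $[y,C]$, I would deduce from their counterparts on $Y$: the bracket on $X$ lifts through $\pi$ (Lemma 3.2 of \cite{putnam00}), so a holonomy on $X$ lifts to a holonomy on $Y$ acting sheet-by-sheet on the preimage, and Definition \ref{SUmeasures} transports the equalities term by term. Conditions (4) and (5), the scalings $\mu_X^{s,\varphi(x)}\circ\varphi = \lambda^{-1}\mu_X^{s,x}$ and $\mu_X^{u,\varphi(x)}\circ\varphi = \lambda\,\mu_X^{u,x}$, would follow from $\pi\circ\psi = \varphi\circ\pi$: pushing the defining decompositions through $\psi$ converts the $X$-scaling into the $Y$-scaling, and the constant $\lambda$ is common to $X$ and $Y$ because an almost one-to-one factor map preserves topological entropy. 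Assembling these five verifications completes the argument, the only genuinely delicate point being the cross-sheet constancy of the unstable measure in condition (1).
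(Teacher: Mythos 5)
Your skeleton matches the paper's (prove the holonomy condition separately, decompose $\pi^{-1}$ of sets, push conditions (3)--(5) through equivariance), and you correctly isolate the crux: the cross-sheet constancy of the unstable factor in condition (1). But your proposed remedy for that crux does not close it. Condition (2) on $Y$ is a \emph{local} statement: it applies only when the two base points are close enough that the bracket $[\,\cdot\,,\cdot\,]$ is defined. The preimages $y_j \in \pi^{-1}(x)$ are indeed all stably equivalent, but they need not be mutually close, so $[B_j, y_{j'}]$ need not be defined and condition (2) on $Y$ cannot be invoked between sheets --- which is exactly the difficulty you name, and then the fix you offer is that same local condition, so the argument is circular as stated. (It can be patched: push forward by $\psi^N$ using condition (5) until the sheets are close, apply (2), pull back; but this global stable holonomy and its compatibility with the lifts of $B$ is precisely the work that is missing.) The paper avoids the problem entirely by an ordering trick: it proves condition (2) for $X$ \emph{first} --- for $z$ near $x$ one finds $y' \in \pi^{-1}(z)$ near $y$, transports $U(y)$ upstairs by $[\,\cdot\,,y']$, and pushes down --- and then in the proof of condition (1) it never compares distant sheets of $Y$ at all. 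After decomposing $\pi^{-1}(C) = \cup_{i,j} C_{ij}'$ with base points $y_{ij} \in C_{ij}'$, the images $x_{ij} = \pi(y_{ij})$ all lie in $C \subset X^s(x,\epsilon)$, hence are all \emph{close to $x$ downstairs}, so the already-proved condition (2) on $X$ gives $\mu_Y^{u,y_{ij}}(B_{ij}') = \mu_X^{u,x_{ij}}([B,x_{ij}]) = \mu_X^{u,x}(B)$ sheet by sheet, and the unstable factor pulls out of the sum. That inversion --- establish the $X$-level holonomy invariance before the product formula, then exploit that all relevant base points sit inside $C$ --- is the missing idea.

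A secondary gap: in your decomposition $\pi^{-1}([B,C]) = \sqcup_j [B_j, C_j]$ you take $C_j$ to be the full preimage piece in $Y^s(y_j)$ and immediately apply condition (1) on $Y$ to each term. But $C_j$, while contained in the global stable set $Y^s(y_j)$, need not lie within the local product scale of $y_j$ (preimages of a small set under a factor map can be spread out in the global stable class), so neither the bracket $[B_j, C_j]$ nor the product formula on $Y$ is directly licensed. This is why the paper, after invoking Lemma \ref{finitepreimage}, further refines each $C_i'$ into finitely many pieces $C_{ij}' \subset Y^s(y_{ij}, \epsilon_Y/2)$ before bracketing with lifts $B_{ij}'$ and summing. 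Your well-definedness discussion via Lemma \ref{resolve1to1} is a reasonable addition (the paper leaves it implicit), and your treatment of conditions (2)--(5) otherwise agrees with the paper's P2--P5.
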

\begin{proof}
We prove the result in the case that $\pi$ is $u$-resolving.  The $s$-resolving case is completely analogous.
Let $x \in X$ and let $C = X^s(x_1,\delta) \subset X^s(x,\epsilon)$, $B = X^u(x_2,\delta) \subset X^u(x,\epsilon)$.  Fix $y \in Y$ 
and $U(y) \subset Y^u(y)$ such that $\pi(y) = x_2$, $\pi(U(y)) = X^u(x_2,\delta) = B$.
We need to show
\begin{enumerate}
\item $\mu_X([B,C]) = \mu_X^{u,x}(B)\mu_X^{s,x}(C)$
\item For $z$ close to $x$, $\mu_X^{u,x}(B) = \mu_X^{u,z}([B,z])$
\item $\mu_X^{u,\varphi(x)}(\varphi(B)) = \lambda \mu_X^{u,x}(B)$
\item For $z$ close to $x$, $\mu_X^{s,x}(C) = \mu_X^{s,z}([z,C])$
\item $\mu_X^{s,\varphi(x)}(\varphi(C)) = \lambda^{-1} \mu_X^{s,x}(C)$.
\end{enumerate}
We will prove item 2 fist, as we will use this result in the proof of item 1.
\begin{enumerate}
\item[P2]
We can find $y' \in \pi^{-1}(z)$ such that $y'$ is `close' to $y$. Then, 
$$
\mu_X^{u,x}(B) = \mu_Y^{u,y}(U(y)) = \mu_Y^{u,y'}([U(y),y']) = \mu_X^{u,z}(\pi([U(y),y']))
$$
but $\pi([U(y),y']) = [\pi(U(y)),\pi(y')]$ and $\pi(U(y)) = B$, $\pi(y')=z$ so we have 
$$
 \mu_X^{u,x}(B) = \mu_X^{u,z}(\pi([U(y),y'])) = \mu_X^{u,z}([\pi(U(y)),\pi(y')]) = \mu_X^{u,z}([B,z]).
$$
\item[P1]
Since $C \subset X^s(x,\epsilon)$ is open with compact closure, by lemma \ref{finitepreimage} we can write
$$
\pi^{-1}(C) = \bigcup_{i=1}^m C_i'
$$
where $C_i' \subset Y^s(y_i)$ for some $y_i \in Y$. Moreover, we write each $C_i'$ as a disjoint union of finitely many sets
$$
C_i' = \bigcup_{j=1}^{k_i}C_{ij}'
$$
where $C_{ij}' \subset Y^s(y_{ij}, \epsilon_Y/2)$, and $y_{ij} \in C_{ij}'$.  Let $x_{ij} = \pi(y_{ij})$, and let 
$B_{ij} = [B,x_{ij}]$.  Let $B_{ij}' \subset Y^u(y_{ij})$ be such that $\pi: B_{ij}' \rightarrow B_{ij}$ is a homeomorphism.
We can then write
$$
[B,C] = \pi \left( \bigcup_{i,j}[B_{ij}',C_{ij}'] \right).
$$
So
$$
\mu_X([B,C]) = \mu_Y \left( \bigcup_{i,j}[B_{ij}',C_{ij}'] \right) = \sum_{i,j}\mu_Y([B_{ij}',C_{ij}']) = \sum_{i,j}\mu_Y^{u,y_{ij}}(B_{ij}')\mu_Y^{s,y_{ij}}(C_{ij}').
$$
Now $\mu_Y^{u,y_{ij}}(B_{ij}') = \mu_X^{u,x_{ij}}(B_{ij}) = \mu_X^{u,x}(B)$ for all $i,j$ (by part 1), so we have
$$
\mu_X([B,C]) = \mu_X^{u,x}(B)\sum_{i,j}\mu_Y^{s,y_{ij}}(C_{ij}') = \mu_X^{u,x}(B)\sum_{i}\mu_Y^{s,y_{i}}(C_{i}') = \mu_X^{u,x}(A)\mu_X^{s,x}(B)
$$
\item[P3]
$$
\mu_X^{u,\varphi(x)}(\varphi(B)) = \mu_Y^{u, \psi(y)}(\psi(U(y))) = \lambda \mu_Y^{u,y}(U(y)) = \lambda \mu_X^{u,x}(B).
$$
\item[P4]
We can find $y' \in \pi^{-1}(z)$ such that $y'\in Y^u(y,\epsilon)$. Let $x_{ij}$, $y_{ij}$, $C_i'$, $C_{ij}'$ be as in part 2. 
Let $C_{ij} = \pi(C_{ij}')$, $z_{ij} = [z,x_{ij}]$, $C(z)_{ij} = [z,C_{ij}]$, $y_{ij}' \in \pi^{-1}(z_{ij})$ s.t. $y_{ij}' \in Y^u(y_{ij},\epsilon)$ and $\tilde{C}_{ij}' = [y_{ij}',C_{ij}']$. Then 
$z_{ij} = \pi(y_{ij}')$, $C_{ij} = \pi(C_{ij}')$, $\pi(\tilde{C}_{ij}') = [z_{ij},C_{ij}]$, and $\cup C(z)_{ij} = [z,\cup C_{ij}] = [z,C]$ so
\begin{equation*}
\begin{split}
\mu_X^{s,x}(C) = \sum_i \mu_Y^{s,y_i}(C_i') &= \sum_{i,j}{\mu_Y^{s,y_{ij}}(C_{ij}')} \\
&= \sum_{i,j}{\mu_Y^{s,y_{ij}'}(\tilde{C}_{ij}')} = \mu_X^{s,z}(\cup_{ij}C(z)_{ij}) = \mu_X^{s,z}([z,C])
\end{split}
\end{equation*}

\item[P5] 
$$
%\begin{split}
\mu_X^{s,\varphi(x)}(\varphi(C)) = \sum_i\mu_Y^{s,\psi(y_i)}(\psi(C_i')) = \sum_i\lambda_{-1}\mu_Y^{s,y_i}(C_i') 
= \lambda^{-1}\mu_X^{s,x}(C)
%\end{split}
$$
\end{enumerate} 
\end{proof}

\begin{cor}\label{SUMeasExist}
Let $(X,\varphi)$ be an irreducible Smale space. Then there exist local stable/unstable measures that satisfy the conditions outlined 
at the beginning of this section.
\end{cor}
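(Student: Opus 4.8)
The plan is to obtain the measures on $X$ by transporting the explicit Parry--measure components from a shift of finite type through the two-step resolving factorization supplied by \cite{putnam05}, using Proposition \ref{resolvmeas} as the transport mechanism. Concretely, since $(X,\varphi)$ is irreducible, Cor.\ 1.4 of \cite{putnam05} furnishes an irreducible SFT $(\Sigma,\sigma)$, an irreducible Smale space $(Y,\psi)$, and almost one-to-one factor maps $\pi_1\colon\Sigma\to Y$ and $\pi_2\colon Y\to X$ with $\pi_1$ $s$-resolving and $\pi_2$ $u$-resolving. Because all three systems are irreducible and the factor maps are almost one-to-one (hence entropy preserving, as already noted), every hypothesis of Proposition \ref{resolvmeas} is in place at both stages, and a single common constant $\lambda$ with $\log\lambda=h$ is used throughout.

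First I would install the stable/unstable measures on the base SFT $(\Sigma,\sigma)$. I would take the same explicit formulas $\mu^{u,x}_\Sigma(\Sigma^u(z,2^{-m}))=\lambda^{-m}u_r(j)$ and $\mu^{s,x}_\Sigma(\Sigma^s(y,2^{-l}))=\lambda^{-l}u_l(i)$ as the definition of the components on an irreducible SFT; conditions (2)--(5) and the product identity (1) are then checked by the same computations carried out above, all of which use only $Au_r=\lambda u_r$, $u_lA=\lambda u_l$, the strict positivity of $u_l,u_r$, and the fact that the Parry measure of a cylinder factors as $\lambda^{-2m}u_l(i)u_r(j)$. With the base case in hand, I would apply Proposition \ref{resolvmeas} in its $s$-resolving form to $\pi_1\colon\Sigma\to Y$, taking $\Sigma$ as the domain carrying the measures just described, to produce stable/unstable measures on $Y$ satisfying (1)--(5) (Definition \ref{SUmeasures} and Lemma \ref{finitepreimage} supply the actual construction). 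I would then apply Proposition \ref{resolvmeas} to the $u$-resolving map $\pi_2\colon Y\to X$, now using the measures produced on $Y$ as the hypothesis, to obtain the desired stable/unstable measures on $X$ satisfying all five conditions. The corollary follows by composing the two transports.

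The step I expect to be the main obstacle is the base case for an irreducible SFT that fails to be mixing, precisely because the derivation in the text of the product formula leaned on the primitivity-dependent limit $\lambda^{-n}A^n\to u_ru_l$ (Thm.\ 4.5.12 of \cite{lindmarcus}), which fails when $A$ is irreducible but periodic. I would resolve this either by replacing that limit with its Ces\`aro average, which does converge for irreducible $A$ and recovers the cylinder measure $\lambda^{-2m}u_l(i)u_r(j)$, or by invoking Smale's spectral decomposition to write $\Sigma=\sqcup_i\Sigma_i$ with $\sigma^{I}|_{\Sigma_i}$ mixing, applying the mixing base case to each piece, and reassembling. In the latter route the delicate point is bookkeeping the normalizations across the cyclically permuted components so that the scaling conditions (4) and (5) emerge with the exponents $\lambda^{\mp1}$ for $\varphi$ rather than $\lambda^{\mp I}$ for $\varphi^{I}$; once the per-component measures are rescaled along the $\varphi$-orbit by the appropriate powers of $\lambda$, conditions (1)--(3) are unaffected and (4),(5) acquire the correct single power.
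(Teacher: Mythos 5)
Your proposal is correct and is essentially the paper's own proof: install the explicit Parry-measure components on the SFT and transport them to $X$ by two applications of Proposition~\ref{resolvmeas} along the $s$-resolving and $u$-resolving almost one-to-one maps supplied by Cor.~1.4 of \cite{putnam05}. Your extra attention to the base case is warranted --- the paper states and verifies the Parry formulas only for a \emph{mixing} SFT while the factorization delivers an \emph{irreducible} one --- and either of your fixes closes this gap, since the cylinder formula $\lambda^{-2m}u_l(i)u_r(j)$ and the verification of conditions (1)--(5) require only the Perron--Frobenius eigendata, which exist for any irreducible adjacency matrix.
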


\begin{proof}
As in Cor. 1.4 in \cite{putnam05}, for the irreducible Smale space $(X,\varphi)$ we can find another irreducible Smale space $(Y,\psi)$
 and an irreducible SFT $(\Sigma,\sigma)$, as well as almost 1-to-1 factor maps $\pi_1:\Sigma \rightarrow Y$, $\pi_2:Y \rightarrow X$
 such that $\pi_1$ is $s$-resolving and $\pi_2$ is $u$-resolving.  The conclusion then follows from the explicit form of the Parry 
measure on $\Sigma$ and 2 applications of Prop. \ref{resolvmeas}.
\end{proof}

\section{Proof of Main Result}

To prove Theorem \ref{BowenHomMix} we first establish the result for a mixing SFT and use the machinery of resolving maps to obtain the more general result.  

\begin{prop}\label{BowenHomSFTMix}
Let $(\Sigma, \sigma)$ be a mixing SFT. Fix $x,y \in \Sigma$, $n,m \in \mathbb{Z}$ and define 
\begin{eqnarray*}
B &=& \{z \in \Sigma \ | \ z_i = x_i \ \forall i \leq n \} = \Sigma^u_n(x) \subset \Sigma^u(x,\epsilon_{\Sigma}) \\
C &=& \{z \in \Sigma \ | \ z_i = y_i \ \forall i \geq -m+1 \} = \Sigma^s_m(y) \subset \Sigma^s(y,\epsilon_{\Sigma}). 
\end{eqnarray*}
For each function $f \in C(\Sigma)$ we have 
$$
\lim_{k \rightarrow \infty} \int_{\Sigma}{f d\mu^k_{B,C}} = \int_{\Sigma}{f d\mu_{\Sigma}}.
$$
In other words, $\mu^k_{B,C} \rightarrow \mu_{\Sigma}$ in the weak-$\ast$ topology.
\end{prop}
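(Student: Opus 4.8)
The plan is to reduce the asserted weak-$*$ convergence to the convergence of the measures of individual cylinder sets, and then to evaluate those measures by counting heteroclinic points as path-counts in the graph $G$, feeding the counts into the Perron--Frobenius asymptotics $\lambda^{-L}A^L \to u_r u_l$.

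Since $\Sigma$ is compact and totally disconnected, the locally constant functions are dense in $C(\Sigma)$ and each is a finite linear combination of indicators of basic cylinders $\Sigma_{p,r,s}(\eta)$. As each $\mu^k_{B,C}$ is a probability measure, weak-$*$ convergence follows once I show $\mu^k_{B,C}(D) \to \mu_\Sigma(D)$ for every basic cylinder $D$, which I take to fix the coordinates on a window $c \le i \le d$ to an admissible word $\eta$ running from a vertex $v_r = i(\eta_c)$ to a vertex $v_s = t(\eta_d)$.

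First I would unwind the definitions of $\sigma^k(B)$ and $\sigma^{-k}(C)$. Because $\sigma$ is the left shift, $\sigma^k(B)$ fixes the coordinates $j \le n-k$ (to $x_{j+k}$) and $\sigma^{-k}(C)$ fixes the coordinates $j \ge k-m+1$ (to $y_{j-k}$); for $k$ large these ranges are disjoint, so a point of $h^k_{B,C}$ is exactly a free path filling the gap $n-k+1 \le j \le k-m$ from the vertex $a = t(x_n)$ to the vertex $b = i(y_{-m+1})$. Thus $\# h^k_{B,C} = (A^L)_{a,b}$ with $L = 2k-m-n$. For $k$ large the window of $D$ lies strictly inside this gap, and imposing $\eta$ breaks the free path into a left segment of length $L_1$ from $a$ to $v_r$, the fixed word $\eta$ (of length $d-c+1$), and a right segment of length $L_2$ from $v_s$ to $b$; since these choices are independent,
$$
\#\big(h^k_{B,C} \cap D\big) = (A^{L_1})_{a,r}\,(A^{L_2})_{s,b}, \qquad L_1 + L_2 = L - (d-c+1).
$$
Dividing and applying the Perron--Frobenius limit to each of the three factors, the weights $u_r(a)$ and $u_l(b)$ cancel and the powers of $\lambda$ collapse to $\lambda^{-(d-c+1)}$, giving $\lim_k \mu^k_{B,C}(D) = \lambda^{-(d-c+1)} u_l(r) u_r(s)$, which is precisely the Parry measure $\mu_\Sigma(D)$ recorded above.

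The step I expect to be most delicate is the index bookkeeping: keeping the shift directions, the window of $D$, and the exponents straight so that $L_1$, $L_2$, and the length of $\eta$ sum correctly to $L$, and checking that for all sufficiently large $k$ the window of $D$ is genuinely interior to the free gap, so that the count truly factors as a product of two independent path-counts. The analytic input is then immediate: primitivity of $A$ guarantees $(A^L)_{a,b} > 0$ for large $k$ (so each $\mu^k_{B,C}$ is defined), and the convergence $\lambda^{-L}A^L \to u_r u_l$ handles all three factors at once.
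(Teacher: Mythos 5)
Your proposal is correct and follows essentially the same route as the paper's proof: reduce weak-$*$ convergence to convergence on cylinder sets, express $\#\left(h^k_{B,C}\cap D\right)$ and $\#h^k_{B,C}$ as products of entries of powers of $A$ (two independent connecting path segments versus one free gap), and pass to the limit via the Perron--Frobenius asymptotics $\lambda^{-L}A^{L}\rightarrow u_{r}u_{l}$, recovering the Parry measure of the cylinder. The only cosmetic differences are that you work with cylinders over an arbitrary window $[c,d]$ and spell out the density/uniform-boundedness reduction, whereas the paper uses symmetric windows $[-l+1,l]$ and simply asserts that indicators of such cylinders suffice.
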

\begin{proof} It suffices to prove the result for a function of the form $e_l(\xi) = \chi_{E_l(\xi)}$. Where $E_l(\xi) = \Sigma_{l,i',j'}(\xi)$. 
%Ie $i(\xi_{-l}) = v_{i'}$, and $t(\xi_{l}) = v_{j'}$. Let $t(x_n) = v_i$, and $i(y_{-m}) = v_j$. 
Now for $k \geq max \{n+l,m+l \}$
$$
{\int_{\Sigma}{e_l(\xi) d\mu^k_{B,C}}} = \mu^k_{B,C}(E_l(\xi)) = \frac{\# \left( E_l(\xi)\cap h^k_{B,C} \right)}{\# h^k_{B,C}}.
$$
The number of points in $E_l(\xi)\cap h^k_{B,C}$ is equal to the number of paths of length $k-(n+l)$ from $t(\sigma^k(x)_{-k+n}) = t(x_n) = v_i$ to $i(\xi_{-l+1}) = v_{i'}$, which equals $A^{k-(n+l)}_{ii'}$, times the number of paths of length $k-(n+l)$ from $t(\xi_{l}) = v_{j'}$ to $i(\sigma^{-k}(y)_{k-m+1}) = i(y_{-m+1}) = v_j$, or $A^{k-(m+l)}_{j'j}$. The number of points in $h^k_{B,C}$ is the number of paths from $t(\sigma^k(x)_{-k+n}) = t(x_n) = v_i$ to $i(\sigma^{-k}(y)_{k-m+1}) = i(y_{-m+1}) = v_j$, or $A^{2k-(n+m)}_{ij}$.
We therefore have
$$
{\int_{\Sigma}{e_l(\xi) d\mu^k_{B,C}}} = \frac{A^{k-(n+l)}_{ii'}A^{k-(m+l)}_{j'j}}{A^{2k-(n+m)}_{ij}},
$$
and
\begin{equation*}
\begin{split}
\lim_{k \rightarrow \infty}& {\int_{\Sigma}{e_l(\xi) d\mu^k_{B,C}}} \\
&= \lim_{k \rightarrow \infty} {\frac{A^{k-(n+l)}_{ii'}A^{k-(m+l)}_{j'j}}{A^{2k-(n+m)}_{ij}}} \\
 &= \lim_{k \rightarrow \infty} \frac{e_iA^{k-(n+l)}e_{i'} e_{j'}A^{k-(m+l)}e_{j}}{e_{i}A^{2k-(n+m)}e_{j}} \\
 &= \lambda^{-2l} \frac{e_i \lim_{k}(\lambda^{-k+(n+l)}A^{k-(n+l)})e_{i'} e_{j'}\lim_{k}(\lambda^{-k+(m+l)}A^{k-(m+l)})e_{j}}{e_{i}\lim_{k}(\lambda^{-2k+(n+m)}A^{2k-(n+m)})e_{j}} \\ %badbox here
 &= \lambda^{-2l}\frac{e_i(u_ru_l)e_{i'} e_{j'}(u_ru_l)e_{j}}{e_i(u_ru_l)e_{j}}  \quad \textrm{(by Thm. 4.5.12 in \cite{lindmarcus})}\\
 &= \lambda^{-2l}\frac{u_r(i)u_l(i')u_r(j')(u_l(j)}{u_r(i)u_l(j)} \\
 &= \lambda^{-2l}u_l(i')u_r(j') \\
 &= \mu_{\Sigma}(\Sigma_{l,i',j'}(\xi)) \\
 &= \int_{\Sigma}{e_l(\xi) d\mu_{\Sigma}}.
\end{split}
\end{equation*}  
%As $(\Sigma,\sigma)$ is mixing, $A$ is primitive, so $\lim _{n\rightarrow \infty} \lambda^{-n}A^n = u_ru_l$ (Thm. 4.5.12 in \cite{lindmarcus}). 
\end{proof}

In the above, the choice of the sets $B$, $C$, is limited to certain basic sets. We now wish to extend this result to open sets with compact closure $B' \subset \Sigma^u(x)$, $C' \subset \Sigma^s(x)$. To do this we will first need the following lemmas.

\begin{lem}\label{ratiolimit1}
Let $(\Sigma, \sigma)$ be a mixing SFT.  Fix $x,y \in \Sigma$, $n,m \in \mathbb{Z}$ and define 
\begin{eqnarray*}
B &=& \{z \in \Sigma \ | \ z_i = x_i \ \forall i \leq n \} = \Sigma^u_n(x) \subset \Sigma^u(x,\epsilon_{\Sigma}) \\
C &=& \{z \in \Sigma \ | \ z_i = y_i \ \forall i \geq -m+1 \} = \Sigma^s_m(y) \subset \Sigma^s(y,\epsilon_{\Sigma}). 
\end{eqnarray*}
Then 
$$
\lim_{k \rightarrow \infty} \lambda^{-2k} \#h^k_{B,C} = \mu^u(B)\mu^s(C),
$$
where $\log(\lambda) = h(\Sigma, \sigma)$.
\end{lem}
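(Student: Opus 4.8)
The plan is to bypass the measures $\mu^k_{B,C}$ entirely and instead read off the growth rate directly from the combinatorial count already established inside the proof of Proposition \ref{BowenHomSFTMix}. The sets $B$ and $C$ here are defined exactly as in that proposition, so the same computation applies verbatim: writing $v_i = t(x_n)$ and $v_j = i(y_{-m+1})$, a point of $h^k_{B,C} = \sigma^k(B) \cap \sigma^{-k}(C)$ is forced to agree with $\sigma^k(x)$ on all coordinates $\le n-k$ and with $\sigma^{-k}(y)$ on all coordinates $\ge k-m+1$, so it is determined by a path of length $2k-(n+m)$ from $v_i$ to $v_j$. Hence $\#h^k_{B,C} = A^{2k-(n+m)}_{ij}$, the same quantity that appeared as the denominator in Proposition \ref{BowenHomSFTMix}.

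From there the argument is a short limit computation. I would rewrite $\lambda^{-2k}\#h^k_{B,C} = \lambda^{-(n+m)}\bigl(\lambda^{-(2k-(n+m))}A^{2k-(n+m)}\bigr)_{ij}$ and let $k \to \infty$. Since $2k-(n+m)\to\infty$, the Perron-Frobenius consequence (Thm.\ 4.5.12 in \cite{lindmarcus}) gives $\lambda^{-(2k-(n+m))}A^{2k-(n+m)} \to u_r u_l$ entrywise, so the limit equals $\lambda^{-(n+m)}(u_ru_l)_{ij} = \lambda^{-(n+m)}u_r(i)u_l(j)$. It remains to identify this with $\mu^u(B)\mu^s(C)$, which I would do using the explicit Parry-measure formulas recorded in Section \ref{BowenHeteroSec}: with $t(x_n)=v_i$ and $i(y_{-m+1})=v_j$ those give $\mu^u(B) = \lambda^{-n}u_r(i)$ and $\mu^s(C) = \lambda^{-m}u_l(j)$, whose product is precisely $\lambda^{-(n+m)}u_r(i)u_l(j)$.

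I expect the actual estimates to be entirely routine; the only real care needed is the index bookkeeping. The main obstacle is keeping the shift conventions consistent, namely verifying that $\sigma^k(B)$ constrains the coordinates $\le n-k$ while $\sigma^{-k}(C)$ constrains those $\ge k-m+1$, that the free segment between them carries exactly $2k-(n+m)$ edges, and that the two boundary vertices $v_i = t(x_n)$ and $v_j = i(y_{-m+1})$ are the very same vertices that feed the Parry-measure formulas for $\mu^u(B)$ and $\mu^s(C)$. Once those identifications are pinned down, the three displayed expressions agree term by term and the result follows.
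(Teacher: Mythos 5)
Your proposal is correct and follows essentially the same route as the paper's own proof: identify $\#h^k_{B,C} = A^{2k-(n+m)}_{ij}$ with $v_i = t(x_n)$, $v_j = i(y_{-m+1})$ via the path count already used in Proposition \ref{BowenHomSFTMix}, apply the Perron--Frobenius limit $\lambda^{-N}A^N \to u_ru_l$, and match $\lambda^{-n}u_r(i)\,\lambda^{-m}u_l(j)$ against the explicit Parry formulas for $\mu^u(B)$ and $\mu^s(C)$. Your index bookkeeping (constraints at coordinates $\le n-k$ and $\ge k-m+1$, leaving $2k-(n+m)$ free edges) checks out, so nothing further is needed.
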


\begin{proof}
Let $t(x_n) = v_i$ and $i(y_{-m+1}) = v_j$,
We then have
\begin{eqnarray*}
\lim_{k \rightarrow \infty}\lambda^{-2k}\#h^k_{B,C} &=& \lim_{k \rightarrow \infty}\lambda^{-2k}A_{ij}^{2k-(n+m)} \\
&=& \lambda^{-(n+m)}\lim_{k \rightarrow \infty}\lambda^{-2k+n+m}e_iA^{2k-(n+m)}e_j \\
&=& \lambda^{-(n+m)}e_iu_ru_le_j \\
&=& \lambda^{-n}u_r(i)\lambda^{-m}u_l(j) \\
&=& \mu^u(B)\mu^s(C).
\end{eqnarray*}
\end{proof}

\begin{lem}\label{ratiolimit2}
Let $B\subset \Sigma^u(x)$, $C \subset \Sigma^s(y)$ be open and compact. Then 
$$
\lim_{k \rightarrow \infty} \lambda^{-2k}\# h^k_{B,C} = \mu^u_{\Sigma}(B)\mu^s_{\Sigma}(C),
$$
where $\log(\lambda) = h(\Sigma, \sigma)$.
\end{lem}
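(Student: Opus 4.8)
The plan is to reduce the general compact open sets $B$ and $C$ to the basic cylinder sets already treated in Lemma \ref{ratiolimit1}, and then exploit finite additivity. The first step is the topological observation that, in the locally compact totally disconnected topology on $\Sigma^u(x)$ described in the introduction (\emph{not} the relative topology from $\Sigma$), the basic unstable sets $\Sigma^u_n(x')$ with $x' \in \Sigma^u(x)$ are compact open sets forming a basis, and any two of them are either nested or disjoint. Consequently, a set $B \subset \Sigma^u(x)$ that is simultaneously open and compact is covered by finitely many such cylinders (open cover plus compactness), and the nested-or-disjoint ``laminar'' structure lets me refine this finite cover to a finite \emph{disjoint} family. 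I would thus write $B = \bigsqcup_{a=1}^{p} B_a$ with each $B_a = \Sigma^u_{n_a}(x_a)$ a basic unstable cylinder, and symmetrically $C = \bigsqcup_{b=1}^{q} C_b$ with each $C_b = \Sigma^s_{m_b}(y_b)$ a basic stable cylinder.

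The second step is to push this decomposition through the counting set $h^k_{B,C} = \sigma^k(B) \cap \sigma^{-k}(C)$. Since $B = \bigsqcup_a B_a$ gives $\sigma^k(B) = \bigsqcup_a \sigma^k(B_a)$, and likewise $\sigma^{-k}(C) = \bigsqcup_b \sigma^{-k}(C_b)$, intersecting yields
$$h^k_{B,C} = \bigsqcup_{a,b} \big( \sigma^k(B_a) \cap \sigma^{-k}(C_b) \big) = \bigsqcup_{a,b} h^k_{B_a,C_b},$$
where the union is disjoint because a point lying in two pieces would have to lie in $\sigma^k(B_a) \cap \sigma^k(B_{a'})$ and in $\sigma^{-k}(C_b) \cap \sigma^{-k}(C_{b'})$, forcing $a = a'$ and $b = b'$. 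Hence $\# h^k_{B,C} = \sum_{a,b} \# h^k_{B_a,C_b}$, a \emph{finite} sum each of whose terms is governed by Lemma \ref{ratiolimit1} applied to the points $x_a$ and $y_b$.

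The final step is then purely arithmetic: multiplying by $\lambda^{-2k}$, sending $k \to \infty$, and interchanging the finite sum with the limit gives
$$\lim_{k \to \infty} \lambda^{-2k} \# h^k_{B,C} = \sum_{a,b} \mu^u_\Sigma(B_a)\mu^s_\Sigma(C_b) = \Big(\sum_a \mu^u_\Sigma(B_a)\Big)\Big(\sum_b \mu^s_\Sigma(C_b)\Big) = \mu^u_\Sigma(B)\mu^s_\Sigma(C),$$
the last equality using finite additivity of the regular Borel measures $\mu^u_\Sigma$ and $\mu^s_\Sigma$ over the disjoint decompositions of $B$ and $C$.

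I expect the only genuine obstacle to be the first step, namely verifying carefully that a compact open subset of $\Sigma^u(x)$ really is a finite disjoint union of basic cylinders $\Sigma^u_n(x')$. The subtlety is entirely about the non-standard topology on the unstable set: one must confirm that the cylinders are compact in this topology (the forward coordinates range over a compact admissible path space) and that the nested-or-disjoint structure holds, so that disjointification is legitimate. Note that because $B$ and $C$ are assumed \emph{compact} open, this decomposition is exact and no inner/outer squeeze is needed; once the topological facts are in hand, the reduction to Lemma \ref{ratiolimit1} together with finite additivity completes the argument.
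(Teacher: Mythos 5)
Your proposal is correct and follows essentially the same route as the paper: decompose $B$ and $C$ into finite disjoint unions of basic cylinders, observe that $h^k_{B,C}$ splits as the corresponding disjoint union of the $h^k_{B_a,C_b}$, apply Lemma \ref{ratiolimit1} termwise, and interchange the finite sum with the limit. The only difference is that you spell out the topological justification (compactness plus the nested-or-disjoint structure of cylinders) for the disjoint decomposition, which the paper's proof simply asserts in one line.
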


\begin{proof}
If $B$ and $C$ are clopen, then each is a finite disjoint union of cylinder sets of the form considered in Lemma \ref{ratiolimit1}. Let
$$
B = \sum_{i=1}^nB_i, \quad C = \sum_{i=1}^mC_i,
$$
then for fixed $k$ the $h^k_{B_i,C_j}$ are pairwise disjoint and $\cup_{i,j}h^k_{B_i,C_j} = h^k_{B,C}$. Using Lemma \ref{ratiolimit1} we can now write
\begin{eqnarray*}
\lim_{k \rightarrow \infty} \lambda^{-2k}\# h^k_{B,C} &=& \lim_{k \rightarrow \infty} \sum_{i,j}\lambda^{-2k}\# h^k_{B_i,C_j} \\
&=& \sum_{i,j}\lim_{k \rightarrow \infty}\lambda^{-2k}\# h^k_{B_i,C_j} \\
&=& \sum_{i,j} \mu^u(B_i)\mu^s(C_j) \\
&=& \mu^u(B)\mu^s(C).
\end{eqnarray*}
\end{proof}

\begin{lem}\label{ratiolimit3}
Let $B\subset \Sigma^u(x)$, $C \subset \Sigma^s(y)$ be open with compact closure. Then 
$$
\lim_{k \rightarrow \infty} \lambda^{-2k}\# h^k_{B,C} = \mu^u_{\Sigma}(B)\mu^s_{\Sigma}(C),
$$
where $\log(\lambda) = h(\Sigma, \sigma)$.
\end{lem}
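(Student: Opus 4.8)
The plan is to deduce the result from Lemma~\ref{ratiolimit2} by a standard inner/outer approximation, exploiting two features of the unstable (resp. stable) topology on $\Sigma^u(x)$ (resp. $\Sigma^s(y)$): each is locally compact, Hausdorff and totally disconnected, so it has a basis of compact-open (hence clopen) cylinder sets; and the components $\mu^u_\Sigma$, $\mu^s_\Sigma$ are regular Borel measures. The only structural input beyond \ref{ratiolimit2} is the monotonicity of the counting function: if $B_1\subseteq B_2$ and $C_1\subseteq C_2$ then, since $\sigma$ is a homeomorphism, $h^k_{B_1,C_1}=\sigma^k(B_1)\cap\sigma^{-k}(C_1)\subseteq\sigma^k(B_2)\cap\sigma^{-k}(C_2)=h^k_{B_2,C_2}$, whence $\#h^k_{B_1,C_1}\le\#h^k_{B_2,C_2}$.

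First I would establish the lower bound. Because $B$ has compact closure, $\mu^u_\Sigma(B)<\infty$; writing the open set $B$ as a countable disjoint union of clopen cylinders and taking a large finite subunion $B^-$, and doing the same for $C$ to obtain a clopen $C^-\subseteq C$, inner regularity gives $\mu^u_\Sigma(B^-)\to\mu^u_\Sigma(B)$ and $\mu^s_\Sigma(C^-)\to\mu^s_\Sigma(C)$. Monotonicity together with Lemma~\ref{ratiolimit2} then yields
$$
\liminf_{k\to\infty}\lambda^{-2k}\#h^k_{B,C}\ \ge\ \lim_{k\to\infty}\lambda^{-2k}\#h^k_{B^-,C^-}=\mu^u_\Sigma(B^-)\mu^s_\Sigma(C^-),
$$
and letting $B^-\uparrow B$ and $C^-\uparrow C$ gives $\liminf_{k}\lambda^{-2k}\#h^k_{B,C}\ge\mu^u_\Sigma(B)\mu^s_\Sigma(C)$.

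For the upper bound I would enclose the compact sets $\bar B$ and $\bar C$ in clopen sets $W_B\supseteq\bar B$ and $W_C\supseteq\bar C$ (finitely many cylinders cover each), so that $\#h^k_{W_B,W_C}$ is computed exactly by Lemma~\ref{ratiolimit2}. Decomposing $\#h^k_{W_B,W_C}$ according to whether the $B$- and $C$-coordinates land in $B$, $C$ or their complements within $W_B$, $W_C$, and bounding the complementary counts from below using the lower bound just proved applied to the open, compactly-contained sets $W_B\setminus\bar B$ and $W_C\setminus\bar C$, leads to $\limsup_k\lambda^{-2k}\#h^k_{B,C}\le\mu^u_\Sigma(\bar B)\mu^s_\Sigma(\bar C)$. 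The main obstacle is precisely closing the gap between $\mu^u_\Sigma(\bar B)\mu^s_\Sigma(\bar C)$ and $\mu^u_\Sigma(B)\mu^s_\Sigma(C)$: this is the outer half of the approximation and amounts to controlling the boundaries $\partial B=\bar B\setminus B$ and $\partial C$. Here one must use regularity of $\mu^u_\Sigma$, $\mu^s_\Sigma$ together with the totally disconnected structure to produce clopen supersets whose measures can be driven down toward $\mu^u_\Sigma(B)$, $\mu^s_\Sigma(C)$, so that a squeeze between the inner and outer estimates forces $\lim_k\lambda^{-2k}\#h^k_{B,C}=\mu^u_\Sigma(B)\mu^s_\Sigma(C)$. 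I expect this boundary control, rather than the counting itself, to be the delicate point.
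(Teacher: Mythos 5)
Your lower bound is exactly the paper's argument: approximate $B$ and $C$ from inside by compact open sets (finite unions of cylinders), use the monotonicity $\#h^k_{B_1,C_1}\le\#h^k_{B,C}$, and apply Lemma~\ref{ratiolimit2}; that half is fine. The genuine gap is the upper bound, and it sits precisely at the point you flag and postpone. Your outer estimate can only yield $\limsup_k \lambda^{-2k}\#h^k_{B,C}\le\mu^u_\Sigma(\bar{B})\mu^s_\Sigma(\bar{C})$, and the objects you hope regularity will supply --- clopen supersets of $B$ whose measure is driven down toward $\mu^u_\Sigma(B)$ --- do not exist in general: a compact open subset of $\Sigma^u(x)$ is closed, so every clopen $W\supseteq B$ contains $\bar{B}$ and hence satisfies $\mu^u_\Sigma(W)\ge\mu^u_\Sigma(\bar{B})$. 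Outer clopen approximation is therefore pinned at $\mu^u_\Sigma(\bar{B})\mu^s_\Sigma(\bar{C})$, and no squeeze between your two bounds closes the gap unless $\mu^u_\Sigma(\partial B)=\mu^s_\Sigma(\partial C)=0$.

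This is not a defect you can repair with a cleverer approximation, because the statement itself fails for arbitrary open sets with compact closure. In the full $2$-shift take compact cylinders $D\subset\Sigma^u(x)$ and $C\subset\Sigma^s(y)$. Since $\sigma^{-k}$ is a bijection, $\#h^k_{B,C}=\#\bigl(B\cap\sigma^{-2k}(C)\bigr)$ for any $B\subseteq\Sigma^u(x)$; the set $P=\bigcup_k\bigl(D\cap\sigma^{-2k}(C)\bigr)$ of all heteroclinic points inside $D$ is countable and dense in $D$, so covering its $i$-th point by a cylinder $D_i\subseteq D$ with $\mu^u_\Sigma(D_i)<\epsilon 2^{-i}$ and setting $B=\bigcup_i D_i$ gives an open set with compact closure, $\mu^u_\Sigma(B)<\epsilon$, and $B\supseteq D\cap\sigma^{-2k}(C)$ for every $k$. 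Hence $\#h^k_{B,C}=\#h^k_{D,C}$ for all $k$ and the limit is $\mu^u_\Sigma(D)\mu^s_\Sigma(C)$, not $\mu^u_\Sigma(B)\mu^s_\Sigma(C)$. For comparison: the paper's own proof is the same two-sided clopen squeeze you set up, but it simply asserts the existence of compact open $B_2\supseteq B$, $C_2\supseteq C$ with $\mu^u(B_2)\mu^s(C_2)<\mu^u(B)\mu^s(C)+\epsilon$; by the remark above this presupposes $\mu^u_\Sigma(\bar{B})\mu^s_\Sigma(\bar{C})=\mu^u_\Sigma(B)\mu^s_\Sigma(C)$, i.e.\ null boundaries. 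So your diagnosis of where the difficulty lies is correct --- you are, if anything, more careful than the paper here --- but neither your sketch nor the paper's argument establishes the lemma as stated; both establish it exactly when $\mu^u_\Sigma(\partial B)=\mu^s_\Sigma(\partial C)=0$, for instance when $B$ and $C$ are compact open, which is just Lemma~\ref{ratiolimit2}.
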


\begin{proof}
Fix $\epsilon > 0$
We can find sets $B_1 \subseteq B \subseteq B_2 \subset \Sigma^u(x)$ and $C_1 \subseteq C \subseteq C_2 \subset \Sigma^s(y)$ such that $B_1,\ B_2,\ C_1$ and $C_2$ are compact and open and 
$$
\mu^u(B_2)\mu^s(C_2) - \epsilon < \mu^u(B)\mu^s(C) < \mu^u(B_1)\mu^s(C_1) + \epsilon.
$$
Notice that $\# h^k_{B_1,C_1} \leq \# h^k_{B,C} \leq \# h^k_{B_2,C_2}$, so
\begin{eqnarray*}
\mu^u(B)\mu^s(C) - \epsilon &<& \mu^u(B_1)\mu^s(C_1) \\
&=& \lim_{k \rightarrow \infty} \lambda^{-2k}\# h^k_{B_1,C_1} \\
& \leq & \liminf_{k \rightarrow \infty} \lambda^{-2k}\# h^k_{B,C}
\end{eqnarray*}
and
\begin{eqnarray*}
\mu^u(B)\mu^s(C) + \epsilon &>& \mu^u(B_2)\mu^s(C_2) \\
&=& \lim_{k \rightarrow \infty} \lambda^{-2k}\# h^k_{B_2,C_2} \\
& \geq & \limsup_{k \rightarrow \infty} \lambda^{-2k}\# h^k_{B,C}.
\end{eqnarray*}
As this hold for all $\epsilon >0$ we have 
$$
\limsup_{k \rightarrow \infty} \lambda^{-2k}\# h^k_{B,C} \leq \mu^u(B)\mu^s(C) \leq \liminf_{k \rightarrow \infty} \lambda^{-2k}\# h^k_{B,C}
$$
and hence
$$
\lim_{k \rightarrow \infty} \lambda^{-2k}\# h^k_{B,C} = \mu^u_{\Sigma}(B)\mu^s_{\Sigma}(C).
$$
\end{proof}

We are now ready to prove the more general version of Prop. \ref{BowenHomSFTMix}.

\begin{prop}\label{BowenHomSFTMix2}
The result of Prop. \ref{BowenHomSFTMix} holds with $B \subset \Sigma^u(x)$, $C \subset \Sigma^s(y)$ open with compact closure.
\end{prop}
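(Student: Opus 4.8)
The plan is to reduce the weak-$*$ convergence to a statement about basic clopen cylinder sets, and then to obtain that statement from the counting estimate of Lemma \ref{ratiolimit3} together with the explicit computation already carried out in the proof of Prop.~\ref{BowenHomSFTMix}.

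First I would reduce to indicator functions of basic sets. Since $\Sigma$ is a compact, totally disconnected metric space, the locally constant functions---finite linear combinations of the indicators $e_l(\xi) = \chi_{E_l(\xi)}$---are dense in $C(\Sigma)$ in the uniform norm, and each $E_l(\xi)$ is clopen so each $e_l(\xi)$ is continuous. Because every $\mu^k_{B,C}$ and $\mu_\Sigma$ is a probability measure, a standard $3\epsilon$-argument shows that it suffices to prove
$$
\lim_{k\to\infty}\mu^k_{B,C}(E_l(\xi)) = \mu_\Sigma(E_l(\xi))
$$
for every basic set $E_l(\xi)$; linearity then handles all locally constant $f$, and density handles all of $C(\Sigma)$.

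Next I would write $\mu^k_{B,C}(E_l(\xi)) = \#(E_l(\xi)\cap h^k_{B,C})/\#h^k_{B,C}$ and treat numerator and denominator separately. The denominator is exactly Lemma \ref{ratiolimit3}: $\lambda^{-2k}\#h^k_{B,C}\to\mu^u(B)\mu^s(C)$. For the numerator the target is
$$
\lambda^{-2k}\,\#\!\left(E_l(\xi)\cap h^k_{B,C}\right)\longrightarrow \mu_\Sigma(E_l(\xi))\,\mu^u(B)\,\mu^s(C),
$$
since dividing the two limits then yields $\mu_\Sigma(E_l(\xi))$. I would first record this numerator limit in the clopen case: when $B'$, $C'$ are finite disjoint unions of cylinders, the path count in the proof of Prop.~\ref{BowenHomSFTMix}, summed over the pieces exactly as in Lemma \ref{ratiolimit2}, gives $\lambda^{-2k}\#(E_l(\xi)\cap h^k_{B',C'})\to \mu_\Sigma(E_l(\xi))\mu^u(B')\mu^s(C')$. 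Indeed this decorated count is precisely the numerator of the ratio computed there.

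Finally I would pass from clopen to general $B$, $C$ by the same monotone sandwiching used in Lemma \ref{ratiolimit3}, now carrying the extra clopen factor $E_l(\xi)$. Choosing clopen sets $B_1\subseteq B\subseteq B_2$ and $C_1\subseteq C\subseteq C_2$ with $\mu^u(B_2)-\mu^u(B_1)$ and $\mu^s(C_2)-\mu^s(C_1)$ small, the inclusions $h^k_{B_1,C_1}\subseteq h^k_{B,C}\subseteq h^k_{B_2,C_2}$ survive intersection with $E_l(\xi)$, so the decorated counts stay monotone; taking $\liminf$ and $\limsup$ and then letting the approximations tighten yields the numerator limit for $B$, $C$. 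The main point to get right---the only place the argument genuinely differs from Lemma \ref{ratiolimit3}---is that the relevant convergence must be the count asymptotic for the decorated sets $E_l(\xi)\cap h^k_{\cdot,\cdot}$ rather than the already-known weak-$*$ statement for clopen sets, because the ratio defining $\mu^k_{B,C}(E_l(\xi))$ is not additive in $B$ and $C$. Once the numerator is controlled this way, combining it with Lemma \ref{ratiolimit3} for the denominator finishes the proof.
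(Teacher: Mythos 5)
Your proof is correct, but it takes a genuinely different route from the paper's. The paper decomposes $B = \bigcup_i B_i$ and $C = \bigcup_j C_j$ into countably many disjoint cylinders and writes $\mu^k_{B,C}$ as the convex combination $\sum_{i,j} \frac{\# h^k_{B_i,C_j}}{\# h^k_{B,C}}\, \mu^k_{B_i,C_j}$; it then applies Prop.~\ref{BowenHomSFTMix} to each piece, identifies the limiting weights $\mu^u(B_i)\mu^s(C_j)/\bigl(\mu^u(B)\mu^s(C)\bigr)$ via Lemma~\ref{ratiolimit3}, and handles the infinite sum by truncating at a large index and bounding the tail with $\sup_{z\in\Sigma}|f(z)|$. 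You instead fix the test function to be a cylinder indicator $\chi_{E_l(\xi)}$ and push all the analysis into count asymptotics: you extend the chain of Lemmas~\ref{ratiolimit1}--\ref{ratiolimit3} to the ``decorated'' counts $\#\bigl(E_l(\xi)\cap h^k_{B,C}\bigr)$ --- explicit Perron--Frobenius computation for cylinder $B$, $C$ (which is literally the numerator already computed in the proof of Prop.~\ref{BowenHomSFTMix}), additivity for clopen sets, monotone sandwich for general open sets with compact closure --- and only at the end divide by the denominator asymptotic of Lemma~\ref{ratiolimit3}. Your remark that one must sandwich the scaled counts rather than the ratios (since the ratio $\#(E_l(\xi)\cap h^k)/\# h^k$ is neither additive nor monotone in $B$ and $C$) is exactly the right point, and it is what makes the clopen approximation go through. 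What your route buys is that it avoids interchanging $\lim_k$ with an infinite sum, which is the most delicate and most informally treated step in the paper's argument; the cost is re-running the cylinder-to-clopen-to-open bootstrap once more with the factor $E_l(\xi)$ carried along. Both proofs rest on the same two ingredients (the path count of Prop.~\ref{BowenHomSFTMix} and the clopen sandwich of Lemma~\ref{ratiolimit3}), and both inherit from that lemma the same implicit use of inner and outer clopen approximation of $B$ and $C$, so neither is more general than the other.
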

\begin{proof}
We can write 
$$
B = \bigcup_i B_i, \ \ C = \bigcup_j C_i
$$
where each $B_i$, $C_i$ is of the form considered in Prop. \ref{BowenHomSFTMix}, and the unions are disjoint. For brevity we write
$$
h^k = h^k_{B,C}, \ \ \mu^k = \mu^k_{B,C}
$$
and
$$
h^k_{ij} = h^k_{B_i,C_j}, \ \mu^{k}_{ij} = \mu^k_{B_i,C_j}
$$
Notice that for fixed $k$ the $h^k_{ij}$'s are pairwise disjoint and $\cup_{i,j}h^k_{ij} = h^k$.
%As in the proof of Prop. \ref{BowenHomSFT} it suffices to consider functions of the form $f = e_l(\xi)$. 
We can write
$$
\lim_{k \rightarrow \infty}\int_{\Sigma}fd\mu^k = \lim_{k \rightarrow \infty}\sum_{i,j}\frac{\#h^k_{ij}}{\#h^k}\int_{\Sigma}fd\mu^k_{ij} .
$$
Now let $M = sup_{z\in \Sigma}|f(z)|$, which is finite as $f$ is continuous and $\Sigma$ is compact. For each $k$, $\sum_{i,j}\frac{\#h^k_{ij}}{\#h^k} = 1$ so for any $I \in \mathbb{N}$ we can write
$$
1 = \lim_{k \rightarrow \infty}\sum_{i,j}\frac{\#h^k_{ij}}{\#h^k} = \lim_{k \rightarrow \infty}\sum_{i,j = 1}^I\frac{\#h^k_{ij}}{\#h^k} + \lim_{k \rightarrow \infty}\sum_{i>I,j>I}\frac{\#h^k_{ij}}{\#h^k}.
$$
We also know that
$$
1 = \sum_{i,j}\frac{\mu^u(B_i)\mu^s(C_j)}{\mu^u(B)\mu^s(C)}
$$
and we may choose $I$ large enough so that 
$$
\lim_{k \rightarrow \infty}\sum_{i>I,j>I}\frac{\#h^k_{ij}}{\#h^k} < \frac{\epsilon}{2M},
$$
and
$$
|\sum_{i,j}^I\frac{\mu^u(B_i)\mu^s(C_j)}{\mu^u(B)\mu^s(C)} - 1| < \frac{\epsilon}{2M}.
$$
Using Lemma \ref{ratiolimit3} and Prop. \ref{BowenHomSFTMix} we now have
\begin{eqnarray*}
\left| \lim_{k \rightarrow \infty}\int_{\Sigma}fd\mu^k - \int_{\Sigma}fd\mu\right| \! &=& \left|\lim_{k \rightarrow \infty}\sum_{i,j}\frac{\#h^k_{ij}}{\#h^k}\int_{\Sigma}fd\mu^k_{ij}  - \int_{\Sigma}fd\mu\right| \\
\! &=& \left|\lim_{k \rightarrow \infty}\sum_{i,j}^I\frac{\#h^k_{ij}}{\#h^k}\int_{\Sigma}fd\mu^k_{ij}  + \lim_{k \rightarrow \infty}\sum_{i,j>I}\frac{\#h^k_{ij}}{\#h^k} - \int_{\Sigma}fd\mu\right| \\
\! &=& \left|\sum_{i,j}^I\lim_{k \rightarrow \infty}\frac{\lambda^{-2k}\#h^k_{ij}}{\lambda^{-2k}\#h^k}\int_{\Sigma}fd\mu^k_{ij}  + \lim_{k \rightarrow \infty}\sum_{i,j>I}\frac{\#h^k_{ij}}{\#h^k} - \int_{\Sigma}fd\mu\right| \\
\! &=& \left|\sum_{i,j}^I\frac{\mu^u(B_i)\mu^s(C_j)}{\mu^u(B)\mu^s(C)}\int_{\Sigma}fd\mu + \lim_{k \rightarrow \infty}\sum_{i,j>I}\frac{\#h^k_{ij}}{\#h^k} - \int_{\Sigma}fd\mu\right| \\
\! &\leq & \left|\int_{\Sigma}fd\mu \left(\sum_{i,j}^I\frac{\mu^u(B_i)\mu^s(C_j)}{\mu^u(B)\mu^s(C)} - 1 \right) \right| + \left| \lim_{k \rightarrow \infty}\sum_{i,j>I}\frac{\#h^k_{ij}}{\#h^k} \right| \\
\! &\leq & \left|M \left(\sum_{i,j}^I\frac{\mu^u(B_i)\mu^s(C_j)}{\mu^u(B)\mu^s(C)} - 1 \right) \right| + \left|M \lim_{k \rightarrow \infty}\sum_{i,j>I} \right| \\
\! &<& M\frac{\epsilon}{2M} + M\frac{\epsilon}{2M} \\
\! &=& \epsilon.
\end{eqnarray*}
This holds for all $\epsilon > 0$ so
$$
\lim_{k \rightarrow \infty}\int_{\Sigma}fd\mu^k = \int_{\Sigma}fd\mu
$$
\end{proof}

We now wish to extend this result to the mixing Smale space case. The main tool will be resolving factor maps, and the results in \cite{putnam05}. 

The following proposition allows us to extend the result of lemma \ref{ratiolimit3} to general mixing Smale spaces.

\begin{prop}\label{ratiolimitResolving}
Let $(X,\varphi)$, and $(Y,\psi)$ be mixing Smale spaces, $\pi: Y \rightarrow X$ an almost 1-to-1 (s or u) resolving factor map, and 
suppose the conclusion of lemma \ref{ratiolimit3} holds for $(Y,\psi)$. Then the conclusion of lemma \ref{ratiolimit3} holds for $(X,\varphi)$.
\end{prop}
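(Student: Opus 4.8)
The plan is to reduce the count $\# h^k_{B,C}$ in $X$ to a finite sum of counts in $Y$, to which the hypothesis (the conclusion of Lemma \ref{ratiolimit3} for $(Y,\psi)$) applies directly. I would treat the case where $\pi$ is $u$-resolving; the $s$-resolving case is obtained by interchanging the roles of stable and unstable throughout. Fix $x,y\in X$ and $B\subset X^u(x)$, $C\subset X^s(y)$ open with compact closure. First I would lift $B$: choosing $\tilde x\in\pi^{-1}(x)$, the Proposition stated above (that $\pi|_{Y^u(\tilde x)}\colon Y^u(\tilde x)\to X^u(x)$ is a homeomorphism) lets me set $\tilde B=(\pi|_{Y^u(\tilde x)})^{-1}(B)\subset Y^u(\tilde x)$, again open with compact closure, with $\mu_Y^u(\tilde B)=\mu_X^u(B)$ by construction of $\mu_X^u$ in Defn. \ref{SUmeasures}. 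Next I would lift $C$ using Lemma \ref{finitepreimage}, writing $\pi^{-1}(C)=\bigsqcup_{i=1}^m C_i'$ with $C_i'\subset Y^s(y_i)$ and $m\le M$; by Defn. \ref{SUmeasures}, $\sum_{i=1}^m\mu_Y^s(C_i')=\mu_X^s(C)$.

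The heart of the argument is to show that, for each $k$, $\pi$ restricts to a bijection from $\bigsqcup_{i=1}^m h^k_{\tilde B,C_i'}$ onto $h^k_{B,C}$. That $\pi$ maps this union into $h^k_{B,C}$ is immediate from $\pi\circ\psi=\varphi\circ\pi$ together with $\pi(\tilde B)=B$ and $\pi(C_i')\subset C$. For surjectivity, given $z\in h^k_{B,C}$ I would set $b=\varphi^{-k}(z)\in B$, take its unique lift $\tilde b\in\tilde B$, and put $w=\psi^k(\tilde b)$; then $\pi(w)=z$, while $\pi(\psi^k(w))=\varphi^k(z)\in C$ forces $\psi^k(w)\in\pi^{-1}(C)$, so $w\in h^k_{\tilde B,C_i'}$ for the unique $i$ with $\psi^k(w)\in C_i'$. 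For injectivity, any two preimages in the union lie in $\psi^k(\tilde B)$, so their images under $\psi^{-k}$ lie in $\tilde B\subset Y^u(\tilde x)$ and share the image $\varphi^{-k}(z)$ under $\pi$; since $\pi|_{Y^u(\tilde x)}$ is injective ($u$-resolving), the two preimages coincide. As the $C_i'$ are pairwise disjoint the union is disjoint, and this yields the exact count $\# h^k_{B,C}=\sum_{i=1}^m\# h^k_{\tilde B,C_i'}$.

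With the exact count in hand, I would finish by applying the hypothesis termwise. Since $\pi$ is an almost one-to-one factor map the entropies of $X$ and $Y$ agree, so the normalizing constant $\lambda$ is common. Each pair $(\tilde B,C_i')$ consists of sets open with compact closure in an unstable and a stable set of $Y$, so Lemma \ref{ratiolimit3} gives $\lim_k\lambda^{-2k}\# h^k_{\tilde B,C_i'}=\mu_Y^u(\tilde B)\mu_Y^s(C_i')$. Summing over the finitely many $i$ and factoring out $\mu_Y^u(\tilde B)$,
$$
\lim_{k\to\infty}\lambda^{-2k}\# h^k_{B,C}=\mu_Y^u(\tilde B)\sum_{i=1}^m\mu_Y^s(C_i')=\mu_X^u(B)\mu_X^s(C),
$$
where the last equality is Defn. \ref{SUmeasures}. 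This is precisely the conclusion of Lemma \ref{ratiolimit3} for $(X,\varphi)$.

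The step I expect to be the main obstacle is the exact bijection, and in particular ruling out any loss from $\pi$ being only almost one-to-one. Surjectivity depends on Lemma \ref{finitepreimage} genuinely exhausting $\pi^{-1}(C)$, so that $\psi^k(w)$ always lands in some $C_i'$; injectivity depends essentially on the $u$-resolving hypothesis, which confines all relevant preimages to the single fibre $Y^u(\tilde x)$ on which $\pi$ is injective. Without resolving, a point $z$ could acquire several heteroclinic preimages and the identity would hold only up to a $\mu$-negligible set. A secondary point to verify carefully is that each $C_i'$ produced by Lemma \ref{finitepreimage} is open with compact closure in the (locally compact) stable topology of $Y^s(y_i)$, so that the $Y$-hypothesis applies, and that the uniform bound $m\le M$ keeps the decomposition finite, allowing the limit to be taken through the sum.
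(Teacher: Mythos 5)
Your proposal is correct and takes essentially the same route as the paper's proof: lift $B$ through the unstable homeomorphism, decompose $\pi^{-1}(C)$ via Lemma \ref{finitepreimage}, use $u$-resolving to identify $\# h^k_{B,C}$ with $\sum_i \# h^k_{\tilde B,C_i'}$, and apply the hypothesis termwise together with Defn.\ \ref{SUmeasures}. The only difference is that you spell out the surjectivity/injectivity of $\pi$ on the lifted heteroclinic sets, which the paper merely asserts.
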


\begin{proof}  
Suppose $\pi$ is $u$-resolving (the $s$-resolving case is completely analogous).
% and note that since $\pi$ is almost 1-to-1, $h(X) = h(Y)$. in what follows, $\log(\lambda) = h(X) = h(Y)$.
Let $x_1$, $x_2 \in X$ and $B \subset X^u(x_1)$, $C \subset X^s(x_2)$, and let 
$$
h_X^k = h^k_{B,C}, \ \ 
\mu_X^k = \mu^k_{B,C} .
$$
Now, set $C' = \pi_1^{-1}(C)$. By Lemma \ref{finitepreimage} $ C' = \cup_1^m C'_i$, where the union is disjoint and $C'_i \subset Y^s(y_{2,i})$ for some $y_{2,i} \in Y$. Also, fix $y_1 \in \pi^{-1}(x_1)$, and set $B'$ such that $\pi:B' \rightarrow B$ is a homeomorphism, so $B' \in Y^u(y_1)$. Now 
$$
h^k_{B',C'} = \bigcup_1^m h^k_{B',C'_i}, \ \  \#h^k_{B',C'} = \sum_1^m \# h^k_{B',C'_i}
$$
Notice that since $h_{B',C'}^k \subset Y^u(\varphi^{-k}(y_1))$ and $\pi$ is $u$-resolving, $\pi$ is one-to-one (and hence bijective) on $h_{B',C'}^k$. In other words $\# h^k_{B,C} = \#h^k_{B',C'}$.
Also, recall from prop. \ref{resolvmeas} that 
$$
\mu_X^{u,x_1}(B) = \mu_Y^{u,y_1}(B') , \ {\rm and} \ \mu_X^{s,x_2}(C) = \sum_1^m\mu_Y^{s,y_{2,i}}(C'_i).
$$
Now,
\begin{eqnarray*}
\lim_{k \rightarrow \infty} \lambda^{-2k} \# h^k_{B,C} &=& \lim_{k \rightarrow \infty} \lambda^{-2k} \# h^k_{B',C'} \\ 
&=& \lim_{k \rightarrow \infty} \sum_1^m \lambda^{-2k} \# h^k_{B',C_i'} \\
&=& \sum_1^m \mu_Y^{u,y_1}(B')\mu_Y^{s,y_{2,i}}(C'_i) \\
&=& \mu_X^{u,x_1}(B)\mu_X^{s,x_2}(C)
\end{eqnarray*}
\end{proof}

We are now ready to prove Theorem \ref{ratiolimitSmSp}

%\begin{proposition}\label{ratiolimitSmSp}
%Let $(X, \varphi)$ be a mixing Smale space, $B$, $C$ as in Defn. \ref{HomMeas}. Then
%$$
%\lim_{k \rightarrow \infty} \lambda^{-2k} \# h^k_{B,C} = \mu^u(B)\mu^s(C)
%$$
%\end{proposition}

\begin{proof}[Proof of Theorem \ref{ratiolimitSmSp}]
As in Cor. 1.4 in \cite{putnam05}, for the mixing Smale space $(X,\varphi)$ we can find another mixing Smale space $(Y,\psi)$ and 
a mixing SFT $(\Sigma,\sigma)$, as well as almost 1-to-1 factor maps $\pi_1:\Sigma \rightarrow Y$, $\pi_2:Y \rightarrow X$ such that 
$\pi_1$ is $s$-resolving and $\pi_2$ is $u$-resolving.  The first conclusion then follows from lemma \ref{ratiolimit3} and 2 applications of Prop. \ref{ratiolimitResolving}.

For the second statement notice that 

$$
\lim_{k \rightarrow \infty} \lambda^{-2k}\# h^k_{B,C} = \mu^u(B)\mu^s(C).
$$

Hence
\begin{eqnarray*}
\lim_{k \rightarrow \infty} \log(\lambda^{-2k}\# h^k_{B,C}) &=& \log(\mu^u(B)\mu^s(C)) \\
\lim_{k \rightarrow \infty} \left (\log(\# h^k_{B,C}) - 2k\log(\lambda) - \log(\mu^u(B)\mu^s(C))  \right ) &=& 0 \\
\lim_{k \rightarrow \infty} \left (\frac{\log(\# h^k_{B,C})}{2k} - h(X, \varphi) - \frac{\log(\mu^u(B)\mu^s(C))}{2k}  \right ) &=& 0 \\
\lim_{k \rightarrow \infty} \left (\frac{\log(\# h^k_{B,C})}{2k} - h(X, \varphi) \right ) &=& 0
\end{eqnarray*}
\end{proof}

The following proposition allows us to extend Prop. \ref{BowenHomSFTMix2}  from the mixing SFT case to the mixing Smale space case and prove Theorem \ref{BowenHomMix}.
\begin{prop}\label{BowenHomResMix}
Let $(X,\varphi)$, and $(Y,\psi)$ be mixing Smale spaces, $\pi: Y \rightarrow X$ an almost 1-to-1 (s or u) resolving factor map, and suppose the conclusion of Theorem \ref{BowenHomMix} holds for $(Y,\psi)$. Then the conclusion of Theorem \ref{BowenHomMix} holds for $(X,\varphi)$.
\end{prop}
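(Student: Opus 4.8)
The plan is to follow the template of Proposition \ref{ratiolimitResolving}, transferring the integral $\int_X f\,d\mu^k_{B,C}$ up to $Y$ along $\pi$, invoking the hypothesis there, and then pushing the limiting measure back down. Assume $\pi$ is $u$-resolving (the $s$-resolving case is symmetric). Given $B\subset X^u(x_1)$ and $C\subset X^s(x_2)$, I would first lift the data to $Y$ exactly as before: fix $y_1\in\pi^{-1}(x_1)$ and choose $B'\subset Y^u(y_1)$ with $\pi: B'\to B$ a homeomorphism, and set $C'=\pi^{-1}(C)=\bigcup_{i=1}^m C_i'$ with $C_i'\subset Y^s(y_{2,i})$ as produced by Lemma \ref{finitepreimage}. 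Writing $h^k_{B',C'}=\bigcup_i h^k_{B',C_i'}$ (a disjoint union, since the $C_i'$ are disjoint), the key structural fact, already established in Proposition \ref{ratiolimitResolving}, is that $h^k_{B',C'}\subset Y^u(\psi^{-k}(y_1))$, so that $u$-resolving forces $\pi$ to restrict to a bijection from $h^k_{B',C'}$ onto $h^k_{B,C}$; in particular $\#h^k_{B,C}=\#h^k_{B',C'}$.

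That bijection is the whole point of the reduction: because $\pi$ pairs the points of $h^k_{B,C}$ one-to-one with those of $h^k_{B',C'}$, for any $f\in C(X)$ we obtain $\int_X f\,d\mu^k_{B,C}=\int_Y (f\circ\pi)\,d\mu^k_{B',C'}$, where $\mu^k_{B',C'}$ is the normalized counting measure on $h^k_{B',C'}$. The one wrinkle is that $C'$ is spread over several stable classes, so $\mu^k_{B',C'}$ is not itself one of the measures of Definition \ref{HomMeas} on $Y$; however it is a finite convex combination of such measures, namely $\int_Y (f\circ\pi)\,d\mu^k_{B',C'}=\sum_{i=1}^m w_i^k\int_Y (f\circ\pi)\,d\mu^k_{B',C_i'}$ with weights $w_i^k=\#h^k_{B',C_i'}/\#h^k_{B',C'}\ge 0$ summing to $1$.

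Next I would apply the hypothesis. Since $f$ is continuous on $X$ and $\pi$ is continuous, $f\circ\pi\in C(Y)$, so the assumption that Theorem \ref{BowenHomMix} holds on $Y$ gives, for each fixed $i$, $\lim_k\int_Y (f\circ\pi)\,d\mu^k_{B',C_i'}=\int_Y (f\circ\pi)\,d\mu_Y$. Each of the $m$ terms therefore tends to the \emph{same} limit $L=\int_Y(f\circ\pi)\,d\mu_Y$, and since $m$ is finite and the $w_i^k$ form a probability vector, the convex combination tends to $L$ as well: indeed $|\sum_i w_i^k(\int_Y(f\circ\pi)\,d\mu^k_{B',C_i'}-L)|\le\max_i|\int_Y(f\circ\pi)\,d\mu^k_{B',C_i'}-L|\to 0$. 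A pleasant feature is that this argument does not require the weights to converge, so I need not invoke the counting result on $Y$ at this point.

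Finally, I would identify $L$ with $\int_X f\,d\mu_X$ using the pushforward relation $\mu_X(F)=\mu_Y(\pi^{-1}(F))$ recorded earlier (valid since $\pi$ is an almost one-to-one factor map): the change-of-variables identity for a pushforward measure gives $\int_Y (f\circ\pi)\,d\mu_Y=\int_X f\,d\mu_X$. Stringing the equalities together yields $\lim_k\int_X f\,d\mu^k_{B,C}=\int_X f\,d\mu_X$, as required. I expect the main obstacle to be the bookkeeping in the first two steps rather than the limit itself: one must verify carefully that $\pi$ restricts to a genuine bijection $h^k_{B',C'}\to h^k_{B,C}$ (injectivity from $u$-resolving on a single unstable set, and surjectivity by lifting each $z\in h^k_{B,C}$ through $B'$ and checking that its forward image lands in some $C_i'=\pi^{-1}(C)\cap$(chart)), and that the decomposition $C'=\bigcup_i C_i'$ genuinely presents $\mu^k_{B',C'}$ as a convex combination of Definition \ref{HomMeas} measures on $Y$. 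Once $\#h^k_{B,C}=\#h^k_{B',C'}$ and this decomposition are in hand, everything else is the soft convex-combination limit and the pushforward identity.
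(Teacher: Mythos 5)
Your proof is correct, and its skeleton is the same as the paper's: lift $B$ to $B'\subset Y^u(y_1)$ and $C$ to $C'=\pi^{-1}(C)=\bigcup_i C_i'$ via Lemma \ref{finitepreimage}, use $u$-resolving to see that $\pi$ is a bijection from $h^k_{B',C'}$ onto $h^k_{B,C}$ (so the normalized counting measures correspond under $\pi$), write $\mu^k_{B',C'}$ as the convex combination $\sum_i w_i^k\,\mu^k_{B',C_i'}$ with $w_i^k=\#h^k_{B',C_i'}/\#h^k_{B',C'}$, apply the hypothesis on $(Y,\psi)$ to each piece, and finish with the pushforward identity $\mu_X=\mu_Y\circ\pi^{-1}$. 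The one genuine difference is how you pass to the limit in the convex combination. The paper invokes Theorem \ref{ratiolimitSmSp} applied to $(Y,\psi)$ (hence, implicitly, Lemma \ref{ratiolimit3} and Proposition \ref{ratiolimitResolving}) to show that the weights $w_i^k$ themselves converge, and then rearranges the limit term by term. You instead observe that all $m$ integrals $\int_Y(f\circ\pi)\,d\mu^k_{B',C_i'}$ converge to the \emph{same} limit $L=\int_Y(f\circ\pi)\,d\mu_Y$, so the bound $\bigl|\sum_i w_i^k\bigl(\int_Y(f\circ\pi)\,d\mu^k_{B',C_i'}-L\bigr)\bigr|\le\max_i\bigl|\int_Y(f\circ\pi)\,d\mu^k_{B',C_i'}-L\bigr|\to 0$ makes any convergence of the weights irrelevant. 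This is a real, if modest, improvement: it decouples Proposition \ref{BowenHomResMix} from the counting results, so the weak-$\ast$ convergence theorem no longer depends on the ratio-limit machinery, whereas in the paper the two strands are intertwined. Both arguments share the same level of unchecked bookkeeping (that each $C_i'$ is open with compact closure in its stable set, and that $\pi$ really is surjective from $h^k_{B',C'}$ onto $h^k_{B,C}$, which follows by lifting $z=\varphi^k(b)$ through the homeomorphism $\pi:B'\to B$); you at least flag these explicitly, and they are routine to verify.
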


\begin{proof}  
Suppose $\pi$ is $u$-resolving (the $s$-resolving case is completely analogous).
Let $x_1$, $x_2 \in X$ and $B \subset X^u(x_1)$, $C \subset X^s(x_2)$, and let 
$$
h_X^k = h^k_{B,C}, \ \ 
\mu_X^k = \mu^k_{B,C} .
$$
Now, set $C' = \pi^{-1}(C)$. By Lemma \ref{finitepreimage} $ C' = \cup_1^m C'_i$, where the union is disjoint and $C'_i \subset Y^s(y_{2,i})$ for some $y_{2,i} \in Y$. Also, fix $y_1 \in \pi^{-1}(x_1)$, and set $B'$ such that $\pi:B' \rightarrow B$ is a homeomorphism, so $B' \in Y^u(y_1)$. Now set 
$$
h_X^k = h^k_{B',C'} = \bigcup_1^m h^k_{B',C'_i}, \ \ 
\mu_X^k = \mu^k_{B',C} = \sum_1^m \frac{\#h^k_{B',C'_i}}{\#h^k_{B',C'}}\mu^k_{B',C'_i} .
$$
Notice that since $h_Y^k \subset Y^u(\varphi^{-k}(y_1))$ and $\pi$ is $u$-resolving, $\pi$ is one-to-one (and hence bijective) on $h_Y^k$. In other words $h^k_X = \pi(h^k_Y)$, and therefore $\mu_Y^k = (\mu_X^k \circ \pi)$. Also recall from Lemma \ref{ratiolimitSmSp} that 
$$
\lim_{k \rightarrow \infty}\frac{\#h^k_{B',C'_i}}{\#h^k_{B',C'}} = \frac{\mu^{u,y_{2,i}}(C'_i)}{\sum_{j=1}^k\mu^{u,y_{2,j}}(C'_j)}.
$$
Now, for $f \in C(X)$
\begin{eqnarray*}
\int_X f d\mu_X &=& \int_{\pi^{-1}(X)} (f \circ \pi) d(\mu_{X}\circ\pi) = \int_Y (f\circ\pi) d\mu_Y \\
&=& \lim_{k \rightarrow \infty} \int_{Y} (f \circ \pi) d\mu_{B',C'_i}^k \ \ \textrm{for any $i$, by hypothesis} \\
&=& \left(\lim_{k \rightarrow \infty} \int_{Y} (f \circ \pi) d\mu_{B',C'_i}^k\right)\sum_{i=1}^m\frac{\mu^u(C'_i)}{\sum_{j=1}^k\mu^{u,y_{2,j}}(C'_j)} \\
&=& \left(\lim_{k \rightarrow \infty} \sum_1^m \frac{\#h^k_{B',C'_i}}{\#h^k_{B',C'}}\int_{Y}(f \circ \pi) d\mu_{B',C'_i}^k\right) \\
&=& \lim_{k \rightarrow \infty} \int_{Y} (f \circ \pi) d\mu_Y^k \\
&=& \lim_{k \rightarrow \infty} \int_{Y} (f \circ \pi) d(\mu_X^k \circ \pi) \\
&=& \lim_{k \rightarrow \infty} \int_{X} f d\mu_X^k
\end{eqnarray*}
\end{proof}

We are now ready to prove Theorem \ref{BowenHomMix}.
\begin{proof}[Proof of Theorem \ref{BowenHomMix}]
As in Cor. 1.4 in \cite{putnam05}, for the mixing Smale space $(X,\varphi)$ we can find 
another mixing Smale space $(Y,\psi)$ and a mixing SFT $(\Sigma,\sigma)$, as well as almost one-to-one
 factor maps $\pi_1:\Sigma \rightarrow Y$, $\pi_2:Y \rightarrow X$ such that $\pi_1$ is $s$-resolving and $\pi_2$ is $u$-resolving.  
The conclusion then follows from Prop. \ref{BowenHomSFTMix2} and 2 applications of Prop. \ref{BowenHomResMix}.
\end{proof}

Finally, we prove Theorems \ref{ratiolimitSmSpIrred} and \ref{BowenHomIrred}. 

\begin{proof}[Proof of Theorems \ref{ratiolimitSmSpIrred} and \ref{BowenHomIrred}]
We assume that $B, C$ are contained in $X_{i_{0}}$. 
Without loss of generality, we assume $i_{0}=1$.
Since for any $n \geq 0$, $\varphi^{n}(B), \varphi^{n}(C)$ are both contained in $X_{1 + n}$ 
(where $1 + n$ is interpreted modulo $I$), the intersection of $h_{B,C}^{k}$ with $X_{i}$ is 
$\varphi^{kI + i - 1}(B) \cap \varphi^{-kI + i - 1}(C)$, which we denote by $h_{i}^{k}$.
Furthermore, we define
\[
 \mu_{i}^{k} = (\# h_{i}^{k})^{-1} \sum_{z \in h_{i}^{k}} \delta_{z}.
\]
 With $ 1 \leq i \leq I$ fixed, consider
Theorem \ref{ratiolimitSmSp} applied to the system $( X _{i}, \varphi^{I} \mid X_{i})$ with local unstable and stable sets $\varphi^{i-1}(B)$ and 
$\varphi^{i-1}(C)$. Notice also that 
$h(X_{i}, \varphi^{I}) = I h(X, \varphi)$, so if $\log(\lambda) = h(X, \varphi)$, $\log(\lambda^I) = h(X_{i}, \varphi^{I})$. It now follows that
\begin{eqnarray*}
 \lim_{k} \#  h^{k}_{i} (\lambda^I)^{-2k} & = &  \mu^{u, \varphi^{i-1}(x)}(\varphi^{i-1}(B)) 
             \mu^{u, \varphi^{i-1}(y)}(\varphi^{i-1}(C))  \\
             &  =  &  \lambda^{ 1 - i} \mu^{u,x}(B) \lambda^{i - 1} \mu^{s,y}(C) \\
              & =  &  \mu^{u,x}(B)  \mu^{s,y}(C).
\end{eqnarray*}
Noticing that 
$$
\lim_k \frac{\#h^{k}_{B,C}}{\# h^{k}_{i}} = I
$$
we have
$$
\lim_{k} \#  h^{k}_{B,C} \lambda^{-2kI} = I\mu^{u,x}(B)  \mu^{s,y}(C).
$$
It then follows as in the proof of Theorem \ref{ratiolimitSmSp} that
$$
\lim_{k} \frac{h^{k}_{B,C}}{2kI} = h(X,\varphi).
$$
We also note that, Theorem \ref{BowenHomMix} implies 
\[
 \lim_{k} \mu^{k}_{i} = \mu_{X_{i}}.
\]
Putting all of this together, we have
\begin{eqnarray*}
 \lim_{k} \mu_{B,C}^{k} & = & \lim_{k} (\# h^{k}_{B,C})^{-1} \sum_{z \in h^{k}_{B,C}} \delta_{z}  \\
       &  =  &  \lim_{k}  (\# h^{k}_{B,C})^{-1} \sum_{i=1}^{I}  \sum_{z \in h^{k}_{i}} \delta_{z}  \\ 
       &  =  &  \lim_{k}  \sum_{i=1}^{I}  \frac{\# h^{k}_{i}}{\# h^{k}_{B,C}} (\# h^{k}_{i})^{-1} \sum_{z \in h^{k}_{i}} \delta_{z} \\
   &  =  &   \lim_{k}  \sum_{i=1}^{I}  \frac{\# h^{k}_{i}}{\sum_{j=1}^{I} \# h^{k}_{j}} \mu^{k}_{i} \\
    &  =  &   \lim_{k} \sum_{i=1}^{I}  \frac{\# h^{k}_{i} \lambda^{-2kI}}{ \sum_{j=1}^{I} \# h^{k}_{j} \lambda^{-2kI} }   \mu^{k}_{i} \\
    &  =  &   \sum_{i=1}^{I} \frac{ \mu^{u,x}(B) \mu^{s,y}(C)}{ I \mu^{u,x}(B) \mu^{s,y}(C)} \mu_{X_{i}} \\
  &  =  &  \sum_{i=1}^{I} \frac{1}{I} \mu_{X_{i}} \\
  &  =  &  \mu_{X}.
\end{eqnarray*}
\end{proof}

%	\bibliographystyle{plain}

%	\bibliography{references}

\end{document}